\documentclass[11pt]{article}
\usepackage[margin=1in]{geometry}
\usepackage{amsmath,amssymb,amsthm,booktabs,tikz,wrapfig,microtype,capt-of}
\usepackage[hidelinks]{hyperref}
\hypersetup{pdftitle={The Hibi\textendash Li (2016) conjecture is fully resolved},pdfauthor={Dave Goel}}

\newtheorem{theorem}{Theorem}[section]
\newtheorem{lemma}[theorem]{Lemma}
\newtheorem{proposition}[theorem]{Proposition}
\newtheorem{corollary}[theorem]{Corollary}
\newtheorem{conjecture}[theorem]{Conjecture}
\theoremstyle{definition}
\newtheorem{definition}[theorem]{Definition}
\newtheorem{example}[theorem]{Example}
\theoremstyle{remark}

\newcommand{\OO}{\mathcal O}
\newcommand{\CC}{\mathcal C}
\newcommand{\SSS}{\mathcal S}
\newcommand{\TT}{\mathcal T}
\newcommand{\RR}{\mathcal R}
\newcommand{\relint}{\operatorname{relint}}
\newcommand{\aff}{\operatorname{aff}}

\newcommand{\pyr}{\operatorname{pyr}}

\title{The Hibi--Li (2016) conjecture is fully resolved}
\author{Dave Goel\thanks{The Lean~4 proof of this paper's results:
\url{https://github.com/david44hello/papers_supporting_materials/tree/hibi-li-2016-conjecture-v4.60/hibi-li-2016-conjecture}.}}
\date{\small November 2026}

\begin{document}
\maketitle

\begin{abstract}
Hibi and Li conjectured that the order polytope $\OO(P)$ of a finite
poset $P$ has at most as many faces of each dimension as its chain
polytope $\CC(P)$, and that equality in some dimension $k$ with
$1\leq k\leq|P|-1$ forces the two polytopes to be unimodularly
equivalent. Fourier proved the
inequality in a 2026 preprint. Let $X$ be the five-element poset in
which an element lies above two incomparable elements and below
two other incomparable elements. Using two results from Fourier's preprint, we show that
if an $n$-element poset contains $X$ as an induced subposet, then, for
every $k$ with $2\leq k\leq n-1$, its chain polytope has at least
$\binom{n-3}{k-2}$ more $k$-dimensional faces than its order polytope,
and that for every $n\geq5$ some $n$-element poset containing $X$ attains all these
bounds at once. Since Hibi and Li showed that $\OO(P)$ and $\CC(P)$
are unimodularly equivalent exactly when $P$ does not contain $X$, this proves the equality statement for these values of $k$. The second version of Fourier's preprint
gives an independent proof of this statement. For $k=1$, the equality statement is false: the polytopes $\OO(X)$ and $\CC(X)$ have the same number of edges (Hibi, Li, Sahara and
Shikama) but are not unimodularly equivalent.
Together with Fourier's inequality, these results settle the conjecture. The
proofs given in this paper are formally verified in Lean~4.
\end{abstract}

{\small\noindent\textit{Keywords:} order polytope, chain polytope, face numbers, unimodular equivalence,
Hibi--Li conjecture.

\noindent\textit{2020 Mathematics Subject Classification:} Primary 52B05; Secondary 06A07, 52B20.\par}

\section{Introduction}\label{sec:intro}
This paper completes the resolution of the Hibi--Li (2016) conjecture
(Conjecture~\ref{conj:HL}). Let $P$ be a finite poset with $n$
elements. Its order polytope $\OO(P)$ and chain polytope $\CC(P)$,
both due to Stanley and defined in
Section~\ref{sec:defs}, are $n$-dimensional $0/1$ polytopes with
the same number of vertices \cite[Corollary~1.3 and Theorem~2.2]{Stanley}. For a polytope $Q$, let
$f_k(Q)$ be the number of \emph{$k$-faces} of $Q$, that is, of faces of dimension $k$.

\begin{conjecture}[{\cite[Conjecture~2.4, p.~12]{HL}}]\label{conj:HL}
Let $P$ be a finite poset with $|P|=d>1$. Let
$f(\OO(P))=(f_0,f_1,\ldots,f_{d-1})$ denote the $f$-vector of
$\OO(P)$ and $f(\CC(P))=(f_0',f_1',\ldots,f_{d-1}')$ the $f$-vector of
$\CC(P)$. Then
\begin{enumerate}
\item[(a)] $f_i\leq f_i'$ for all $1\leq i\leq d-1$.
\item[(b)] If $f_i=f_i'$ for some $1\leq i\leq d-1$, then $\OO(P)$ and
$\CC(P)$ are unimodularly equivalent.
\end{enumerate}
\end{conjecture}

\begin{figure}[htb]
\centering
\begin{minipage}{0.40\textwidth}
\centering
\begin{tikzpicture}[scale=0.6,baseline=(current bounding box.north)]
 \foreach \nm/\x/\y/\side in {a/0/0/left,b/2/0/right,c/1/1/right,g/0/2/left,h/2/2/right}
   {\fill (\x,\y) circle (2.2pt); \node[\side] at (\x,\y) {\small $\nm$};}
 \draw (0,0)--(1,1)--(2,0); \draw (0,2)--(1,1)--(2,2);
\end{tikzpicture}
\qquad\qquad
\begin{tikzpicture}[scale=0.6,baseline=(current bounding box.north)]
 \foreach \nm/\x/\y/\side in {a/0/0/left,b/2/0/right,c/1/1/right,g/0/2/left,h/2/2/right}
   {\fill (\x,\y) circle (2.2pt); \node[\side] at (\x,\y) {\small $\nm$};}
 \foreach \y in {-1,-2} {\fill (1,\y) circle (2.2pt);}
 \draw (0,0)--(1,1)--(2,0); \draw (0,2)--(1,1)--(2,2);
 \draw (0,0)--(1,-1)--(2,0); \draw (1,-1)--(1,-2);
\end{tikzpicture}
\end{minipage}\hfill
\begin{minipage}{0.56\textwidth}
\caption{The poset $X$ (left) and the poset $P_7$ of Proposition~\ref{prop:sharp} (right). The poset $P_7$ is obtained from $X$ by adjoining a new least element two times in succession, and it attains the bound of Theorem~\ref{thm:main}.}\label{fig:X}
\end{minipage}
\end{figure}
Conjecture~\ref{conj:HL} is quoted verbatim from Hibi and Li \cite{HL}. Below we write $n$
for $d$ and $k$ for $i$. Unimodular equivalence is defined in Section~\ref{sec:defs}. Let $X$ be the poset of Figure~\ref{fig:X} (left), whose elements satisfy $a,b<c<g,h$, where $a$ and $b$ are incomparable, and so are $g$ and $h$. We say that $P$ \emph{contains} $X$ if five
elements of $P$ satisfy these relations and incomparabilities, that is, if $P$ has an induced subposet isomorphic to $X$, as in \cite[Theorem~2.1]{HL}; the relations need not be cover relations. Part (a) of Conjecture~\ref{conj:HL} was proved by Fourier in a 2026 preprint \cite[Theorem~1.1]{Fourier}; it had been proved earlier for maximal ranked posets \cite[Corollary~4.2]{AFJ} and for posets that can be built by disjoint unions and ordinal sums from posets that do not contain $X$ \cite[Theorem~1]{FHL}.

Part (b) is false for $k=1$: the polytopes $\OO(P)$ and $\CC(P)$ always have the same number of edges \cite[Theorem~2.1]{HLSS}, but
$\OO(X)$ and $\CC(X)$ are not unimodularly equivalent, as they have $8$
and $9$ facets, respectively (Example~\ref{ex:X}). Later papers restate part
(b) for $k\geq2$ \cite[Conjecture~1.1]{FLM}, \cite[Conjecture~28]{Lee}.
We prove part (b) for all $k$ with $2\leq k\leq n-1$.
The second version of Fourier's preprint gives an independent proof of this
statement \cite[Theorem~8.2]{FourierV2}; it shows that, for posets that
contain $X$, the inequalities of part (a) are strict for $2\leq k\leq n-1$,
and it states no explicit lower bound for the differences
$f_k(\CC(P))-f_k(\OO(P))$.

For a polytope
$Q$, let $F_Q(t)=\sum_{j\geq0}f_j(Q)t^j$ be its \emph{face polynomial};
it counts all nonempty faces, $Q$ included. Our first result, Theorem~\ref{thm:main}, is a sharp lower bound: if
$P$ contains $X$, then $F_{\CC(P)}-F_{\OO(P)}\geq t^2(1+t)^{n-3}$
coefficientwise, so $\CC(P)$ has at least $\binom{n-3}{k-2}$ more
$k$-faces than $\OO(P)$ for $2\leq k\leq n-1$. For every $n\geq5$, some $n$-element poset containing $X$ attains all these bounds. Our second result,
Corollary~\ref{cor:equality}, settles the equality case: for
$2\leq k\leq n-1$ we have $f_k(\OO(P))=f_k(\CC(P))$ if and only if $P$
does not contain $X$, that is, by Hibi and Li's criterion (Theorem~\ref{thm:HL}), if and only if $\OO(P)$ and $\CC(P)$ are unimodularly equivalent. Together with part (a) and the case $k=1$, this resolves Conjecture~\ref{conj:HL}:

\begin{theorem}\label{thm:resolution}
Let $P$ be a finite poset with $n\geq2$ elements.
\begin{enumerate}
\item[(a)] \textup{(Fourier \cite[Theorem~1.1]{Fourier})} We have $f_k(\OO(P))\leq f_k(\CC(P))$ for every $k$.
\item[(b)] For $2\leq k\leq n-1$ we have $f_k(\OO(P))=f_k(\CC(P))$ if and only if $\OO(P)$ and $\CC(P)$ are
unimodularly equivalent \textup{(Corollary~\ref{cor:equality})}.
\item[(c)] We have $f_1(\OO(P))=f_1(\CC(P))$ \textup{\cite[Theorem~2.1]{HLSS}}, while $\OO(P)$ and $\CC(P)$ are not unimodularly equivalent if $P$ contains $X$ \textup{(Theorem~\ref{thm:HL}; for $P=X$, see also Example~\ref{ex:X})}.
So, for every poset that contains $X$, part (b) of Conjecture~\ref{conj:HL} fails for $k=1$.
\end{enumerate}
\end{theorem}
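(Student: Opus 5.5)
Theorem~\ref{thm:resolution} assembles results stated or cited elsewhere, so the plan is to reduce each part to one of them. Part~(a) is Fourier's inequality \cite[Theorem~1.1]{Fourier}, used as a black box. For $k=0$ it also follows from Stanley's count of vertices; for $k=n$ both sides equal $1$; for $k>n$ both vanish.

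For part~(b), fix $2\leq k\leq n-1$ and split according to whether $P$ contains $X$. If $P$ does not contain $X$, then Hibi and Li's criterion (Theorem~\ref{thm:HL}) gives that $\OO(P)$ and $\CC(P)$ are unimodularly equivalent. A unimodular equivalence is an affine bijection, so it maps faces to faces of the same dimension, and hence $f_k(\OO(P))=f_k(\CC(P))$. If $P$ contains $X$, then $n\geq5$ and Theorem~\ref{thm:main} applies. It gives $f_k(\CC(P))-f_k(\OO(P))\geq\binom{n-3}{k-2}$, which is positive because $0\leq k-2\leq n-3$. So equality fails, and by Theorem~\ref{thm:HL} the polytopes are not unimodularly equivalent. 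Combining the two cases, equality in dimension $k$ holds exactly when $P$ avoids $X$, which is exactly when the polytopes are unimodularly equivalent; this is Corollary~\ref{cor:equality}.

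For part~(c), the equality $f_1(\OO(P))=f_1(\CC(P))$ is \cite[Theorem~2.1]{HLSS}, and non-equivalence for posets containing $X$ is again Theorem~\ref{thm:HL}. For $P=X$ there is also a direct check: Example~\ref{ex:X} shows the facet counts are $8$ and $9$, and unimodularly equivalent polytopes have equal $f$-vectors. Such a $P$ therefore satisfies the hypothesis of Conjecture~\ref{conj:HL}(b) with $i=1$ but not its conclusion.

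The assembly itself is immediate. The substantive step is Theorem~\ref{thm:main}, specifically the lower bound $t^2(1+t)^{n-3}$ on $F_{\CC(P)}-F_{\OO(P)}$, which is proved separately in the paper. Within this proof, the only points needing care are the boundary indices: one must confirm that $\binom{n-3}{k-2}$ is nonzero for every $k$ in the range, and that $n\geq5$ holds automatically whenever $P$ contains $X$.
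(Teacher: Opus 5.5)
Your proposal is correct and follows the paper's proof. Part (a) is cited from Fourier; part (b) is exactly the argument of Corollary~\ref{cor:equality}, namely Theorem~\ref{thm:HL} together with invariance of $f$-vectors in one direction, and the positive coefficient $\binom{n-3}{k-2}$ of $t^2(1+t)^{n-3}$ from Theorem~\ref{thm:main} in the other; and part (c) combines the Hibi--Li--Sahara--Shikama edge count with Theorem~\ref{thm:HL}, as in the paper (which additionally notes that (a) follows from Proposition~\ref{prop:mono} by switching along a linear extension, while you add the trivial checks for $k=0$ and $k\geq n$).
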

Table~\ref{tab:compare} compares Theorem~\ref{thm:main} and Corollary~\ref{cor:equality} with earlier work, and Figure~\ref{fig:beforeafter} shows the equality part before and after this paper.\usetikzlibrary{patterns}%
\begin{figure}[htb]
\centering
\begin{tikzpicture}[x=0.62cm,y=0.28cm,
  known/.style={circle,fill=black,inner sep=1.5pt},
  cross/.style={font=\footnotesize},
  num/.style={font=\scriptsize,inner sep=0.6pt}]
 \begin{scope}
  \fill[pattern=north east lines,pattern color=black!45]
    (-0.45,1.55) -- (4.45,1.55) -- (4.45,6.95) -- (-0.45,2.05) -- cycle;
  \foreach \n in {5,...,9} {
    \pgfmathtruncatemacro{\x}{\n-5}
    \node[cross,text=red!75!black] at (\x,0) {$\times$};
    \node[known] at (\x,1) {};
    \pgfmathtruncatemacro{\top}{\n-2}
    \node[known] at (\x,\top) {};
    \node[font=\scriptsize] at (\x,-1) {$\n$};
  }
  \foreach \n in {6,...,9} {
    \pgfmathtruncatemacro{\x}{\n-5}
    \pgfmathtruncatemacro{\last}{\n-2}
    \foreach \k in {3,...,\last} {
      \pgfmathtruncatemacro{\y}{\k-1}
      \node[font=\scriptsize,fill=white,inner sep=0.5pt] at (\x,\y) {?};
    }
  }
  \node[font=\scriptsize,fill=white,inner sep=0.5pt] at (0,2) {?};
  \foreach \k in {1,...,8} {\pgfmathtruncatemacro{\y}{\k-1}\node[font=\scriptsize,anchor=east] at (-0.55,\y) {$\k$};}
  \node[font=\scriptsize,anchor=east] at (-1.1,3.5) {$k$};
  \node[font=\scriptsize,anchor=east] at (-0.55,-1) {$n$};
  \node[font=\small,anchor=south] at (2,8.1) {known before this paper};
  \node[font=\scriptsize,anchor=west] at (4.5,0) {\textcolor{red!75!black}{false} \cite{HLSS,HL}};
  \node[font=\scriptsize,anchor=west] at (4.5,1) {known \cite{FLM}};
  \node[font=\scriptsize,anchor=west] at (4.5,7) {known \cite{HL}};
  \node[font=\scriptsize,anchor=west] at (4.5,4) {open \cite{Lee}};
 \end{scope}
 \begin{scope}[xshift=7.4cm]
  \fill[green!22] (-0.45,0.55) -- (4.45,0.55) -- (4.45,7.95) -- (-0.45,3.05) -- cycle;
  \foreach \n in {5,...,9} {
    \pgfmathtruncatemacro{\x}{\n-5}
    \node[cross,text=red!75!black] at (\x,0) {$\times$};
    \pgfmathtruncatemacro{\last}{\n-1}
    \foreach \k in {2,...,\last} {
      \pgfmathtruncatemacro{\y}{\k-1}
      \pgfmathtruncatemacro{\b}{round(factorial(\n-3)/(factorial(\k-2)*factorial(\n-1-\k)))}
      \node[num] at (\x,\y) {$\b$};
    }
    \node[font=\scriptsize] at (\x,-1) {$\n$};
  }
  \foreach \k in {1,...,8} {\pgfmathtruncatemacro{\y}{\k-1}\node[font=\scriptsize,anchor=east] at (-0.55,\y) {$\k$};}
  \node[font=\scriptsize,anchor=east] at (-1.1,3.5) {$k$};
  \node[font=\scriptsize,anchor=east] at (-0.55,-1) {$n$};
  \node[font=\small,anchor=south] at (2,8.1) {this paper};
  \node[font=\scriptsize,anchor=west] at (4.5,0) {\textcolor{red!75!black}{false}};
  \node[font=\scriptsize,anchor=west,align=left] at (4.5,4) {exact\\ minimum};
 \end{scope}
\end{tikzpicture}
\caption{The equality part of Conjecture~\ref{conj:HL} for posets with $n$ elements (columns) and faces of dimension $k$ (rows); for $n\leq4$ no poset contains $X$. Left, before this paper (sources in Table~\ref{tab:compare}): false for $k=1$ ($\times$), known for $k=2$ and for $k=n-1$ (dots), open otherwise (hatched, ?). Right, this paper: if $P$ contains $X$, then $\CC(P)$ has at least $\binom{n-3}{k-2}$ more $k$-dimensional faces than $\OO(P)$ (the number at each point), and one $n$-element poset attains all these bounds at once (Theorem~\ref{thm:main}); so, for $2\leq k\leq n-1$, equality holds exactly when $P$ does not contain $X$ (Corollary~\ref{cor:equality}). The second version of Fourier's preprint proves this equality statement independently \cite[Theorem~8.2]{FourierV2}, without an explicit lower bound.}\label{fig:beforeafter}
\end{figure}

\begin{table}[htb]
\centering
\small
\caption{Comparison with earlier results.}\label{tab:compare}
\vspace{2pt}
\begin{tabular}{@{}p{0.42\textwidth}@{\hspace{0.03\textwidth}}p{0.55\textwidth}@{}}
\toprule
New result & Best result before this paper\\
\midrule
$f_k(\CC(P))-f_k(\OO(P))\geq\binom{n-3}{k-2}$ for $2\leq k\leq n-1$
when $P$ contains $X$; attained for every $n\geq5$
(Theorem~\ref{thm:main}).
& $f_k(\CC(P))-f_k(\OO(P))\geq0$ \cite[Theorem~1.1]{Fourier}; for the
difference of the numbers of facets, bounds \cite[Theorem~10]{Bha} and a formula \cite[Theorem~2.1]{FourierGap}.\\
\addlinespace
Part (b) of Conjecture~\ref{conj:HL} for \mbox{$2\leq k\leq n-1$}
(Corollary~\ref{cor:equality}); proved independently by Fourier
\cite[Theorem~8.2]{FourierV2}.
& Known for $k=2$ \cite[Theorem~6.1]{FLM} and for $k=n-1$
\cite[Theorem~1.3]{HL}; listed as open for $3\leq k\leq n-2$ in
\cite[Section~5]{Lee}; false for $k=1$ \cite[Theorem~2.1]{HLSS}, \cite[Theorem~2.1]{HL}.\\
\bottomrule
\end{tabular}
\end{table}

\paragraph{Outline of the proof.}
Fourier \cite{Fourier} passes from $\OO(P)$ to $\CC(P)$ in $n$ steps,
called switches; each switch changes only the inequalities that involve
a given coordinate $z$. The vector $u$ of the other coordinates ranges over the same polytope $B$
of dimension $n-1$ before and after the switch, and there are functions $L$ and $U$ on $B$ with $L\leq U$ such that $z$ ranges over $[L(u),U(u)]$ before the
switch and over $[0,U(u)-L(u)]$ after it. By Lemma~\ref{lem:identity}, which rests on an identity of Fourier, the change of the face polynomial is a sum of local terms, one for each face of $B$. These terms are computed from two subdivisions of $B$, one into the maximal cells on which $L$ is affine and one into those on which $U$ is affine, and from their common refinement. The local terms are nonnegative because the two subdivisions are transverse (Proposition~\ref{prop:transverse}, due to Fourier). Call a common facet of two cells of the same subdivision a \emph{wall}. For a poset that contains $X$, we find a switch and a $4$-dimensional face $E_0$ of $B$ in which a wall of the first subdivision crosses a wall of the second (Figure~\ref{fig:slice}). The crossing persists in every $e$-face
$E\supseteq E_0$ of $B$ and makes the local term of $E$ at least
$t^{e-2}(1+t)^2=t^{e-2}+2t^{e-1}+t^e$: the $(e-2)$-cell where the two walls cross
contributes $t^{e-2}$; the two walls, each cut in two by the other, contribute $2t^{e-1}$; and the four $e$-cells around the crossing, which form a cycle in the graph of Lemma~\ref{lem:carrier}(ii), contribute $t^e$. As $E_0$ lies in at least
$\binom{n-5}{e-4}$ such faces for each $e$, summing over $e$ gives at least
$t^2(1+t)^{n-3}$. Repeatedly adjoining a new least element to $X$ (Figure~\ref{fig:X}, right) gives posets whose order and chain polytopes are iterated pyramids over $\OO(X)$ and $\CC(X)$, respectively; each pyramid step multiplies
the difference by $1+t$, so these posets attain the bound.

We use without proof two results from Fourier's preprint \cite{Fourier}: the identity that writes the change
of the face polynomial at a switch through the face polynomials of $B$ and of the three subdivisions (its Lemma~4.1;
the first equality of our Lemma~\ref{lem:identity}), and the transversality of the two subdivisions of $B$ (its Proposition~5.2;
our Proposition~\ref{prop:transverse}).
We also use the following published results, each cited where it is used: results of Hibi and Li \cite[Theorem~2.1 and Lemma~1.1]{HL}, of Hibi, Li, Sahara and Shikama \cite[Theorem~2.1]{HLSS}, and of Stanley \cite[Corollary~1.3 and Theorem~2.2]{Stanley}; properties of vertex figures and face figures \cite[Proposition~2.4 and Chapter~2]{Ziegler}; and the standard facts about polytopes listed in Section~\ref{sec:defs}. Every other step is proved here, so Theorem~\ref{thm:main},
Corollary~\ref{cor:equality} and Theorem~\ref{thm:resolution}(b) depend on Fourier's preprint only through
Lemma~\ref{lem:identity} and Proposition~\ref{prop:transverse}.

An expanded version of this paper, with further results, is posted
on Zenodo \cite{G}. None of those results is used here.

\section{Preliminaries}\label{sec:defs}
\paragraph{Notation.} A \emph{chain} of a poset $P$ is a set of pairwise comparable elements, and an
\emph{antichain} is a set of pairwise incomparable ones; a chain is \emph{maximal} if no other chain
contains it. An \emph{order ideal} is a set $I\subseteq P$ with $p\in I$ whenever $p\leq p'\in I$, and
an \emph{upper set} is the complement of an order ideal. An element $r$ \emph{covers} $s$, and is an
\emph{upper cover} of $s$, if $s<r$ and no element lies strictly between them. A \emph{linear
extension} of $P$ is a listing $p_1,\ldots,p_n$ of its elements with $i<j$ whenever $p_i<p_j$. An \emph{induced subposet} of $P$ is a subset of $P$ with the restriction of the order of $P$. Throughout, inequalities between polynomials are understood coefficientwise.

\paragraph{Faces.} For a polytope $Q$, an affine inequality is \emph{valid} on $Q$ if every
point of $Q$ satisfies it; it is \emph{tight} at the points where equality holds, and it \emph{cuts out} the set of points of $Q$ at which it is tight. The \emph{faces} of $Q$ are the sets cut out in
this way; they include $Q$ and $\varnothing$. A \emph{facet} is a face of dimension $\dim Q-1$. The set of
points of $Q$ at which several valid inequalities are all tight is a face, since it is cut out by their sum; we say that these inequalities cut it out. If $Q$ is given as the set of solutions of a finite list of affine inequalities,
these are its \emph{defining inequalities}. For a nonempty face $F$ of $Q$, the \emph{defining inequalities} of $F$ are those of $Q$ together with the reverse of each defining inequality of $Q$ that is tight on $F$; by a fact listed below, $F$ is the set of their solutions. We use without proof the following standard facts about polytopes \cite[Chapters~1--2]{Ziegler}. A face of a face is a face, a face of $Q$ contained in a face $G$ of $Q$ is a face of $G$, and the intersection of two faces is a face. A nonempty
face $F$ of a polytope $Q$ is the set of points of $Q$ at which all defining inequalities of $Q$ that are
tight on $F$ are tight. A face other than $Q$ is \emph{proper}; a proper face has smaller dimension than $Q$ and lies in a facet. A polytope is
the convex hull of its vertices and the disjoint union of the relative interiors of its nonempty faces. The set of solutions of finitely many affine inequalities, if bounded, is a polytope, and a polytope has finitely many faces \cite[Theorem~1.1]{Ziegler}.
The vertices of the convex hull of a finite set $V$ lie in $V$. A \emph{polyhedron} is the set of solutions of finitely many affine inequalities and equations; a nonempty polyhedron, in particular a nonempty polytope, is the closure of its relative interior. An affine function that is nonnegative on a convex set and vanishes at a point of its relative interior vanishes on the whole set. Finally, no nonempty convex set is covered by finitely
many proper affine subspaces of its affine hull.

\paragraph{Order and chain polytopes.}
For a finite poset $P$, the \emph{order polytope}\footnote{Hibi and Li \cite{HL} use the inequalities $v_p\geq v_q$ for $p\leq q$; the unimodular map $v\mapsto\mathbf1-v$, where $\mathbf1$ is the all-ones vector, maps one version of $\OO(P)$ onto the other, so the results of Hibi and Li used in Example~\ref{ex:X} and Theorem~\ref{thm:HL} hold for either version.} and the \emph{chain polytope} are
\begin{gather*}
\OO(P)=\{v\in[0,1]^P: v_p\leq v_q \text{ whenever } p\leq q\},\\
\CC(P)=\Bigl\{x\in\mathbb R_{\geq0}^P:\sum_{p\in K}x_p\leq1
 \text{ for every chain } K\subseteq P\Bigr\}.
\end{gather*}
Both have dimension $|P|$ (see also Section~\ref{sec:cited}), and both are $0/1$ polytopes: the vertices of $\OO(P)$ are the indicator
vectors of the upper sets of $P$, and those of $\CC(P)$ are the indicator vectors of the antichains
of $P$ \cite[Corollary~1.3 and Theorem~2.2]{Stanley}. Sending an upper set to the set of its minimal elements is
a bijection from the upper sets onto the antichains, with inverse $A\mapsto\{r\in P:r\geq p\text{ for some }p\in A\}$,
so the two polytopes have the same number of vertices. Two $n$-dimensional polytopes $Q,Q'\subseteq\mathbb R^n$ with integer vertices are
\emph{unimodularly equivalent} (as in \cite{HL}) if $Q'=M(Q)+\mathbf w$
for an integral matrix $M$ with $\det M=\pm1$ and an integral vector
$\mathbf w$. The affine bijection $u\mapsto Mu+\mathbf w$ maps the $j$-faces of $Q$ onto those of $Q'$, so
unimodularly equivalent polytopes have the same $f$-vector.

\begin{example}\label{ex:X}
The facets of $\OO(P)$ correspond to the minimal elements, the maximal elements and the cover
relations of $P$, and those of $\CC(P)$ to the elements and the maximal chains of $P$
\cite[Lemma~1.1]{HL}. So $\OO(X)$ has $2+2+4=8$ facets and $\CC(X)$ has $5+4=9$; as unimodularly
equivalent polytopes have the same $f$-vector, $\OO(X)$ and $\CC(X)$ are not unimodularly equivalent.
Proposition~\ref{prop:X} shows that
$F_{\CC(X)}-F_{\OO(X)}=t^2+2t^3+t^4=t^2(1+t)^2$, the bound of
Theorem~\ref{thm:main} for $n=5$. Hence $\OO(X)$ and $\CC(X)$ have the same number of vertices and the same number of edges, and $\CC(X)$ has one more face of dimension $2$ than $\OO(X)$, two more of dimension $3$ and one more of dimension $4$, in agreement with the numbers of facets, $9$ for $\CC(X)$ and $8$ for $\OO(X)$.
\end{example}

\begin{theorem}[{\cite[Theorem~2.1]{HL}}]\label{thm:HL}
Let $P$ be a finite poset. Then $\OO(P)$ and $\CC(P)$ are unimodularly
equivalent if and only if $P$ does not contain $X$.
\end{theorem}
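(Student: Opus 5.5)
The plan is to prove the two directions separately. For one direction I compare facet counts using \cite[Lemma~1.1]{HL}. For the other I write down an explicit unimodular affine map.

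\emph{If $P$ contains $X$, the polytopes are not equivalent.} Unimodularly equivalent polytopes have the same $f$-vector, so it suffices to show that $\CC(P)$ has more facets than $\OO(P)$. By \cite[Lemma~1.1]{HL}, $\OO(P)$ has $m_{\min}+m_{\max}+c$ facets and $\CC(P)$ has $|P|+\mu$ facets. Here $c$ is the number of cover relations and $\mu$ the number of maximal chains.

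Encode $P$ as a directed acyclic graph $G$:
\begin{itemize}
\item take the Hasse diagram;
\item add a source $\hat0$ joined to every minimal element;
\item add a sink $\hat1$ joined from every maximal element.
\end{itemize}
Then $G$ has $V=|P|+2$ vertices and $E=m_{\min}+m_{\max}+c$ edges, and $\mu$ is the number of $\hat0$--$\hat1$ paths. The claim becomes $\mu\geq E-V+2$, with strict inequality when $P$ contains $X$.

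I would prove $\mu\geq E-V+2$ by induction on $E$. Every edge lies on a source--sink path, so $G$ can be built by adding directed ears, each a path between two existing vertices whose interior vertices are new. An ear with $j$ new vertices adds $j+1$ edges, so it raises $E-V$ by exactly $1$. Each ear creates at least one new $\hat0$--$\hat1$ path, so $\mu$ rises by at least $1$.

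For strictness, use the copy $a,b<c<g,h$ of $X$. The element $c$ has at least two distinct maximal chains below it, through $a$ and through $b$, and at least two above it, through $g$ and through $h$. In the ear decomposition, the last ear that creates a second route into or out of $c$ then creates at least two new paths. Arranging the ears so that this happens is the delicate bookkeeping in this direction.

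\emph{If $P$ does not contain $X$, the polytopes are equivalent.} For each $c\in P$, either the elements below $c$ form a chain or the elements above $c$ form a chain; otherwise two incomparable elements below $c$ and two above would give a copy of $X$. Label $c$ \emph{down} in the first case and \emph{up} otherwise. Define $\phi\colon\OO(P)\to\mathbb R^P$ coordinatewise:
\begin{itemize}
\item for a down element, $x_c=v_c-v_{c^-}$, where $c^-$ is the unique lower cover of $c$, with $v_{c^-}:=0$ if $c$ is minimal;
\item for an up element, $x_c=v_{c^+}-v_c$, where $c^+$ is the unique upper cover, with $v_{c^+}:=1$ if $c$ is maximal.
\end{itemize}
Order the coordinates by a linear extension. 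Each $x_c$ is then $\pm v_c$ plus terms in other coordinates, arranged so that the matrix is triangular with diagonal entries $\pm1$. This would be checked after possibly splitting the elements into the down-part and the up-part, with an order on each. Hence $\phi$ is integral affine with $\det=\pm1$.

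It remains to show $\phi(\OO(P))=\CC(P)$. The map sends vertices to vertices: the indicator of an upper set goes to the indicator of its minimal elements, at least on down elements. Up elements need the dual description, and here I would have to check that the two descriptions match on the interface.

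For the inequalities, summing $x$ along a maximal chain should telescope to at most $1$. On a segment of down elements the sum telescopes to $v_{\mathrm{top}}-v_{\mathrm{bottom}}$, and on a segment of up elements it telescopes to $v_{\mathrm{top}^+}-v_{\mathrm{bottom}}$.

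\textbf{Main obstacle.} The main obstacle is this compatibility at a point where a chain passes from down elements to up elements. If it fails, I would instead choose the labels more cleverly so that every maximal chain is down on an initial segment and up on a final segment. Then each chain sum telescopes to $v_{\text{junction}^+}-v_{\text{junction}^-}\le 1$. Equality holds at the tight vertices, which gives the facets of $\CC(P)$ and hence equality of the polytopes.
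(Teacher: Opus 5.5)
The paper does not prove this theorem. It quotes it from Hibi--Li and uses it as a black box. Only the case $P=X$ of the ``only if'' direction is checked, in Example~\ref{ex:X}, by the same facet count you use. Theorem~\ref{thm:main} would also give that direction for every $P$ containing $X$, but the ``if'' direction is never reproved, so your proposal has to stand on its own. Its strategy works, but two steps are missing.

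\textbf{The ``if'' direction.} Your last sentence is not a valid argument. A polytope inside $\CC(P)$ that meets every facet of $\CC(P)$ need not equal $\CC(P)$. So you have only shown $\phi(\OO(P))\subseteq\CC(P)$, and the reverse inclusion is missing. It follows from your own vertex remark:
\begin{itemize}
\item $\phi$ has an integer matrix and integer translation and is injective, and it sends $\OO(P)$ into $\CC(P)\subseteq[0,1]^P$.
\item So each vertex $\mathbf 1_F$ goes to a $0/1$ point of $\CC(P)$. Such a point is the indicator of an antichain, hence a vertex of $\CC(P)$.
\item The two polytopes have equally many vertices, so $\phi$ is a bijection on vertices, and therefore $\phi(\OO(P))=\CC(P)$.
\end{itemize}
Your ``main obstacle'' is not an obstacle. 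If $\{p<c\}$ is a chain, so is $\{p<c'\}$ for every $c'<c$. Hence the down elements form an order ideal $D$, with $c^-\in D$ for $c\in D$ and $c^+\notin D$ for $c\notin D$. Listing $D$ first makes the matrix block triangular with diagonal entries $\pm1$. Every maximal chain $p_1<\cdots<p_k$ is then automatically down on an initial segment and up on the rest, and no relabeling is needed. If $p_i$ is its last down element, the chain sum is $v_{p_i}+1-v_{p_{i+1}}\leq1$, with $v_{p_0}=0$ and $v_{p_{k+1}}=1$ at the ends. It is not $v_{\mathrm{junction}^+}-v_{\mathrm{junction}^-}$.

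\textbf{The ``only if'' direction.} Your bound $\mu\geq E-V+2$ is fine, but strictness is asserted rather than proved. The missing fact is a path count. Write $N_H(s,t)$ for the number of directed $s$--$t$ paths in $H$. An ear from $x$ to $y$ added to $H$ creates exactly $N_H(\hat0,x)\,N_H(y,\hat1)$ new $\hat0$--$\hat1$ paths. With this count, no arranging of ears is needed:
\begin{itemize}
\item In any ear decomposition, take the ear after which $N(\hat0,c)\geq2$ and $N(c,\hat1)\geq2$ first both hold. They do hold in $G$, because of $a,b$ and $g,h$.
\item If $c$ is an interior vertex of this ear, both factors are at least $2$.
\item Otherwise, since $G$ is acyclic, the ear adds routes on only one side of $c$. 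Say it adds routes out of $c$, while $N_H(\hat0,c)\geq2$ already holds. Then $x$ is reachable from $c$ in $H$, so $N_H(\hat0,x)\geq N_H(\hat0,c)\geq2$.
\end{itemize}
Either way this ear adds at least two paths, so $\mu\geq E-V+3$.
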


\paragraph{Pyramids.} For use in Section~\ref{sec:sharp}, we recall the faces of a pyramid and its face polynomial. A \emph{pyramid} $\pyr(Q)$ over a polytope $Q$
is the convex hull of $Q$ and a point $o$, the \emph{apex}, outside the affine hull $\aff(Q)$. Its nonempty faces are the nonempty faces $G$ of $Q$, the set $\{o\}$, and the pyramids $\operatorname{conv}(G\cup\{o\})$ over the nonempty faces $G$ of $Q$. As $o\notin\aff(G)$, the pyramid $\operatorname{conv}(G\cup\{o\})$ has dimension $\dim G+1$. These sets are pairwise distinct: no $G$ contains $o$, and $\operatorname{conv}(G\cup\{o\})\cap Q=G$ because $o\notin\aff(Q)$. To see that they are faces, note that an affine function $\psi$ on $\aff(Q)$ extends to an affine function $\tilde\psi$ on
$\aff(\pyr(Q))$ with any prescribed value at $o$. If $\psi\geq0$ on $Q$ and $G$ is the set of points of $Q$ at which $\psi$ vanishes,
then the inequality $\tilde\psi\geq0$, valid on $\pyr(Q)$ as it holds on $Q$ and at $o$, cuts out $G$ from $\pyr(Q)$ if $\tilde\psi(o)=1$, and $\operatorname{conv}(G\cup\{o\})$ if $\tilde\psi(o)=0$; for $\psi\equiv1$ and $\tilde\psi(o)=0$ it cuts out $\{o\}$. Conversely, let $F$ be a
nonempty face of $\pyr(Q)$. Its vertices are among $o$ and the vertices of $Q$. As $Q$ is a face of $\pyr(Q)$ (the case $\psi\equiv0$, $\tilde\psi(o)=1$ above), the set $G=F\cap Q$ is a face of $\pyr(Q)$ contained in $Q$, hence a face of $Q$; its vertices are the vertices of $F$ other than $o$. So
$F=G$ if $o\notin F$, $F=\{o\}$ if $G=\varnothing$, and $F=\operatorname{conv}(G\cup\{o\})$ otherwise. Hence
$F_{\pyr(Q)}(t)=1+(1+t)F_Q(t)$.

\section{Chain-order polytopes and Fourier's method}\label{sec:cited}
Fourier \cite{Fourier} passes from $\OO(P)$ to $\CC(P)$ through the chain-order polytopes of Fang and Fourier \cite{FF} and of Fang, Fourier, Litza and Pegel \cite{FFLP}. We follow Fourier's notation, except that we write $\OO_I(P)$ (defined below) for $\OO_{I,J}(P)$.

\paragraph{Results from Fourier's preprint.} Since that preprint has not yet been refereed, we state exactly what we
take from it. We adopt its Definition~4.2 (our Definition~\ref{def:transverse}), and we use without proof
its Lemma~4.1 (the first equality of our Lemma~\ref{lem:identity}) and its Proposition~5.2 (our
Proposition~\ref{prop:transverse}). We reprove its Proposition~3.1 (our Proposition~\ref{prop:fibres}),
Steps~1--2 of the proof of its Lemma~4.3 (our Lemma~\ref{lem:carrier}) and its Proposition~6.1 (our
Proposition~\ref{prop:mono}), as well as the elementary facts about $\OO_I(P)$ and about the subdivisions defined below that we need.

\paragraph{Chain-order polytopes.} Let $I\subseteq P$ be an order ideal and $J=P\setminus I$. The coordinates $x_p$ indexed by $p\in I$ are the \emph{chain coordinates}, and the coordinates $y_r$ indexed by $r\in J$ are the \emph{order coordinates}. The \emph{chain-order polytope}
$\OO_I(P)$ is defined by the
inequalities
\[
\begin{gathered}
x_p\geq0\ (p\in I),\qquad 0\leq y_r\leq1\ (r\in J),\qquad
y_r\leq y_{r'}\ (r<r' \text{ in } J),\\
\textstyle\sum_{p\in K}x_p\leq y_r\ \ (r\in J,\ K\subseteq I \text{ a
chain with } p<r \text{ for all } p\in K),\\
\textstyle\sum_{p\in K}x_p\leq1\ \ (K\subseteq I \text{ a chain}).
\end{gathered}
\]
We call the inequalities in the second and third lines of this display \emph{chain inequalities}, and the inequalities $y_r\leq y_{r'}$ \emph{order inequalities}. By definition, $\OO_\varnothing(P)=\OO(P)$ and $\OO_P(P)=\CC(P)$. Each $\OO_I(P)$ lies in $[0,1]^P$, since for $p\in I$ the chain $\{p\}$ gives $x_p\leq1$; so it is a polytope. It has dimension $|P|$: if
$r_1,\ldots,r_m$ is a linear extension of $J$, the point with $x_p=\frac1{4|P|}$ ($p\in I$) and
$y_{r_i}=\frac12+\frac{i}{4m}$ satisfies every inequality above strictly, as every sum $\sum_{p\in K}x_p$
over a chain $K\subseteq I$ is at most $\frac{|I|}{4|P|}\leq\frac14$.

\paragraph{From order coordinates to chain coordinates.}
Let $q$ be minimal in $J$, so that $I\cup\{q\}$ is again an order
ideal. We call the passage from $\OO_I(P)$ to $\OO_{I\cup\{q\}}(P)$ the
\emph{switch} of $q$. Let $n=|P|$, put $P'=P\setminus\{q\}$ with the induced order (then $I$ is an order ideal of $P'$),
and let $B=\OO_I(P')$, the \emph{base} of the switch, of dimension
$n-1$. For $u\in B$, with coordinates $x_p$ ($p\in I$) and $y_r$ ($r\in J\setminus\{q\}$), put
\begin{gather*}
L(u)=\max\Bigl\{\sum_{p\in K}x_p: K\subseteq I \text{ a chain with }
p<q \text{ for all } p\in K\Bigr\},\\
U(u)=\min\bigl(\{y_r:r>q\}\cup\{1\}\bigr).
\end{gather*}
The empty chain is allowed in the definition of $L$; its sum is $0$. Every $r>q$ lies in
$J\setminus\{q\}$, as $J$ is an upper set. We have $0\leq L\leq U\leq1$ on $B$: the empty chain gives $L\geq0$,
and for a chain $K$ as in the definition of $L$ and any $r>q$, the inequalities
$\sum_{p\in K}x_p\leq y_r$ and $\sum_{p\in K}x_p\leq1$ are defining inequalities of $B$, as $p<q<r$ for
$p\in K$. As the maximum of finitely many affine functions, $L$ is convex and piecewise affine; likewise, as the minimum of finitely many affine functions, $U$ is concave and piecewise affine.

\begin{proposition}[{\cite[Proposition~3.1]{Fourier}}]\label{prop:fibres}
In coordinates $(u,z)$, where $u\in B$ and $z$ is the coordinate indexed by $q$ ($z=y_q$
for $\OO_I(P)$ and $z=x_q$ for $\OO_{I\cup\{q\}}(P)$),
\begin{gather*}
\OO_I(P)=\{(u,z):u\in B,\ L(u)\leq z\leq U(u)\},\\
\OO_{I\cup\{q\}}(P)=\{(u,z):u\in B,\ 0\leq z\leq U(u)-L(u)\}.
\end{gather*}
\end{proposition}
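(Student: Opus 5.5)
The plan is to verify both descriptions directly, coordinate by coordinate, by sorting the defining inequalities of $\OO_I(P)$ and of $\OO_{I\cup\{q\}}(P)$ into those that do not involve $z$ and those that do. For $\OO_I(P)$, write a point as $(u,z)$ with $z=y_q$. An inequality of $\OO_I(P)$ that does not involve $y_q$ is exactly a defining inequality of $B=\OO_I(P')$. Chains and order relations inside $J\setminus\{q\}$ and inside $I$ are the same in $P$ and in $P'$, because $P'$ carries the induced order.

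The inequalities that do involve $y_q$ are of four kinds.
\begin{itemize}
\item The bounds $0\leq y_q\leq1$.
\item The order inequalities $y_q\leq y_r$ for $r>q$. There are none of the form $y_r\leq y_q$ with $r<q$ in $J$, since $q$ is minimal in $J$.
\item The chain inequalities $\sum_{p\in K}x_p\leq y_q$ for chains $K\subseteq I$ with all $p<q$.
\item The chain inequalities with right-hand side $y_r$ for $r\neq q$. These do not involve $y_q$.
\end{itemize}
Taking the maximum over $K$ (the empty chain included, which gives $y_q\geq0$) yields $y_q\geq L(u)$. Taking the minimum of the upper bounds yields $y_q\leq U(u)$. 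So $\OO_I(P)=\{(u,z):u\in B,\ L(u)\leq z\leq U(u)\}$.

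For $\OO_{I\cup\{q\}}(P)$, write $z=x_q$. The inequalities not involving $x_q$ are again exactly those of $B$: now $I\subseteq P'$ with order ideal $I$ and complement $J\setminus\{q\}$, and the relevant chains in $I$ and relations in $J\setminus\{q\}$ are unchanged. The inequalities involving $x_q$ are $x_q\geq0$ and the chain inequalities for chains $K'\subseteq I\cup\{q\}$ containing $q$, with right-hand side $y_r$ (all of $K'$ below $r\in J\setminus\{q\}$) or $1$. Such a chain is $K\cup\{q\}$, where $K\subseteq I$. Every element of $K$ is comparable to $q$, and none lies above $q$: anything above $q$ lies in the upper set $J$. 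So all $p\in K$ satisfy $p<q$. Conversely, every such $K$ gives a chain $K\cup\{q\}$.

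The condition that the whole chain lies below $r$ reduces to $q<r$, because $p<q<r$. So the inequalities read $x_q+\sum_{p\in K}x_p\leq y_r$ for $r>q$, and $x_q+\sum_{p\in K}x_p\leq1$. For fixed $r$, or for the bound $1$, and ranging over all admissible $K$, they are equivalent to $x_q\leq y_r-L(u)$ and $x_q\leq 1-L(u)$. Together these say $x_q\leq U(u)-L(u)$, which proves the second description.

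The only delicate points are bookkeeping ones, and these are where I would be careful:
\begin{itemize}
\item the index sets of chains and relations in $P'$ match those in $P$ with $q$ removed;
\item no inequality of the form $y_r\leq y_q$ with $r<q$ in $J$ arises, by minimality of $q$ in $J$;
\item every chain of $I\cup\{q\}$ through $q$ has the form $K\cup\{q\}$ with $K$ below $q$;
\item the "below $r$" condition then collapses to $q<r$.
\end{itemize}
None of these is a real obstacle. The main one is making sure that the maximum defining $L$ and the minimum defining $U$ range over precisely the same families of chains and elements that occur in the inequalities.
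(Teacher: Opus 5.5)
Your proof is correct and takes the same route as the paper. You split the defining inequalities into those not involving $z$, which are exactly those of $B$, and those involving $z$, and you read off $L(u)\leq z\leq U(u)$, respectively $0\leq z\leq U(u)-L(u)$, using that $q$ is minimal in $J$ and that the chains of $I\cup\{q\}$ through $q$ are the sets $K\cup\{q\}$ with $K\subseteq I$ a chain below $q$ (your fourth bullet, the chain inequalities with right-hand side $y_r$, $r\neq q$, is misplaced in the list of inequalities involving $y_q$, as these are inequalities of $B$, but this is only a slip of exposition).
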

\begin{proof}
As $q$ is minimal in $J$ and $I$ is an order ideal, every element comparable to $q$ is either in
$I$ and below $q$, or in $J$ and above $q$. Hence the defining inequalities of $\OO_I(P)$ that involve
$y_q$ are $0\leq y_q\leq1$, $y_q\leq y_r$ ($r>q$) and $\sum_{p\in K}x_p\leq y_q$ ($K\subseteq I$ a chain
below $q$), that is, $L(u)\leq y_q\leq U(u)$; the remaining inequalities are those of $B$. In
$\OO_{I\cup\{q\}}(P)$, the element $q$ is maximal in $I\cup\{q\}$, so the chains that contain $q$ are the
sets $K\cup\{q\}$ with $K$ as in the definition of $L$. The inequalities that involve $x_q$ are
$x_q\geq0$, $x_q+\sum_{p\in K}x_p\leq1$ and $x_q+\sum_{p\in K}x_p\leq y_r$ ($r>q$), that is,
$0\leq x_q\leq U(u)-L(u)$; again, the others are those of $B$.
\end{proof}
For example, let $P$ be the chain $p_1<p_2$. At the switch of $p_1$ (from $I=\varnothing$ to $I=\{p_1\}$) we
have $L\equiv0$, so the two polytopes of Proposition~\ref{prop:fibres} coincide: $\OO_{\{p_1\}}(P)=\OO(P)$, with
$x_{p_1}=v_{p_1}$. At the switch of $p_2$ (from $I=\{p_1\}$ to $I=P$) we have $B=[0,1]$, $L=x_{p_1}$ and
$U\equiv1$, and Proposition~\ref{prop:fibres} gives the triangles
$\OO_{\{p_1\}}(P)=\{(x_{p_1},z):0\leq x_{p_1}\leq z\leq1\}$ and
$\CC(P)=\{(x_{p_1},z):x_{p_1}\geq0,\ 0\leq z\leq1-x_{p_1}\}$, both with face polynomial $3+3t+t^2$.

\paragraph{Subdivisions.}
A \emph{subdivision} of a polytope $Q$ is a finite family of polytopes, called its \emph{cells}, such that the union of the cells is $Q$, every nonempty face of a cell is a cell, and any two cells meet in a common face (possibly empty). The relative interiors $\relint(C)$ of the cells $C$ then partition $Q$. Indeed, each point $x$ of a cell $C$ lies in the relative interior of the smallest face of $C$
that contains $x$, which is a cell; and if $x\in\relint(C)\cap\relint(C')$, then the face $C\cap C'$ of $C$
contains a relative interior point of $C$, so $C\cap C'=C$, and likewise $C\cap C'=C'$. A \emph{$k$-cell} is a
cell of dimension $k$, and a cell is \emph{maximal} if no other cell contains it. Call a set $D\subseteq B$ \emph{full-dimensional} if
$\dim D=\dim B=n-1$, and say that $L$ is \emph{affine on} $D$ if $L$ agrees on $D$ with an affine function. Let
$\SSS$ consist of the maximal (under inclusion) full-dimensional convex subsets of $B$ on which $L$ is affine,
together with all their nonempty faces, and
define $\TT$ in the same way for $U$; the next paragraph shows that these maximal sets are polytopes.
For example, for $P=X$ with $I=\{a,b\}$ and $q=c$, the maximal cells of $\SSS$ are $B\cap\{x_a\geq x_b\}$ and $B\cap\{x_a\leq x_b\}$,
and those of $\TT$ are $B\cap\{y_g\geq y_h\}$ and $B\cap\{y_g\leq y_h\}$ (Figure~\ref{fig:slice} and
Proposition~\ref{prop:X}).

\paragraph{The maximal cells of $\SSS$ and $\TT$.}
Write $L=\max_K\ell_K$, where $\ell_K(u)=\sum_{p\in K}x_p$ is the \emph{chain sum} of a chain $K$ in the definition of $L$. The maximal cells of
$\SSS$ are the full-dimensional ones among the sets $A_\ell=\{u\in B:L(u)=\ell(u)\}$, where $\ell$ runs over the chain sums $\ell_K$.
Indeed, $A_\ell=\{u\in B:\ell(u)\geq\ell_K(u)\text{ for all }K\}$ is a polytope, and $L$ equals the affine function $\ell$ on it. Now let $D\subseteq B$ be convex and full-dimensional, with $L$ affine on $D$. The finitely many convex sets $A_\ell\cap D$ cover $D$, so one of them, say $A_\ell\cap D$, is full-dimensional, since finitely many hyperplanes cannot cover a full-dimensional convex set. The affine functions $L|_D$ and $\ell$ agree on that set, hence on $D$; so $D\subseteq A_\ell$. If $A_\ell\subseteq A_{\ell'}$ and $A_\ell$ is full-dimensional, then $\ell=\ell'$, as $\ell=L=\ell'$ on $A_\ell$. Hence every maximal
such set $D$ is a full-dimensional $A_\ell$; conversely, if a full-dimensional $A_\ell$ lies in a convex
set $D\subseteq B$ on which $L$ is affine, then $D\subseteq A_{\ell'}$ for some $\ell'$, so $\ell=\ell'$ and $D=A_\ell$. So the full-dimensional sets $A_\ell$ are the maximal sets in the definition of $\SSS$; as every cell of $\SSS$ is a face of one of them and none of them contains another, they are its maximal cells. The full-dimensional sets $A_\ell$ cover $B$. Indeed, every point of $B$ lies in some $A_\ell$, so the union of the full-dimensional sets $A_\ell$ contains every point of $B$ outside the lower-dimensional sets $A_\ell$. These finitely many sets lie in finitely many hyperplanes of $\aff(B)$, which cover no nonempty open subset of $B$ (Section~\ref{sec:defs}). So the points of $B$ outside them are dense in $B$, and, as the union is closed, it equals $B$. The same holds for $\TT$, with $U$ and its terms $y_r$ and $1$ in place of $L$ and the chain sums.

\paragraph{The common refinement.}
Let $\RR=\SSS\vee\TT$ be the \emph{common refinement} of $\SSS$ and $\TT$, whose
cells are the nonempty intersections $S\cap T$ with $S\in\SSS$,
$T\in\TT$. We now check that all three are subdivisions of $B$. The maximal cells of $\SSS$ and of $\TT$ cover $B$, as just
shown, so the cells $S\cap T$ do too. Next, $A_\ell\cap A_{\ell'}=A_\ell\cap\{\ell=\ell'\}$ is a face of
$A_\ell$ (as $\ell\geq\ell'$ on $A_\ell$) and of $A_{\ell'}$. If $F$ and $F'$ are faces of $A_\ell$ and of $A_{\ell'}$, respectively, then
$F\cap A_{\ell'}$ and $F'\cap A_\ell$ are faces of the polytope $A_\ell\cap A_{\ell'}$. Hence their intersection $F\cap F'$ is a face of $A_\ell\cap A_{\ell'}$, so of $A_\ell$ and of $A_{\ell'}$, and therefore of $F$ and of $F'$. Thus any two cells of $\SSS$ meet in a common face; the same holds for $\TT$. Each cell of $\SSS$ is a face of some $A_\ell$, whose defining inequalities are those of $B$ and the inequalities $\ell\geq\ell_K$ for all $K$; so the cells of $\SSS$, and likewise those of $\TT$, have defining inequalities in the sense of Section~\ref{sec:defs}. As $S\cap T$ is the set of solutions of the defining inequalities of $S$ and of $T$ together, a nonempty face of $S\cap T$ is $S_0\cap T_0$, where $S_0$ and $T_0$ are the faces of $S$ and $T$ cut out by the defining inequalities of $S$ and of $T$ that are tight on that face; so $\RR$ contains the nonempty faces of its cells. Finally,
$(S\cap T)\cap(S'\cap T')=(S\cap S')\cap(T\cap T')$ is cut out of $S\cap T$ by valid inequalities of
$S$ and of $T$, so it is a face of $S\cap T$, and likewise of $S'\cap T'$.

\paragraph{Restrictions and carriers.}
For a face $E$ of $B$, the \emph{restriction} $\SSS|_E$ consists of the
nonempty sets $S\cap E$ with $S\in\SSS$; each is a face of $S$, because a valid inequality of $B$ that cuts out $E$ is valid on $S$. So $\SSS|_E$ is a subdivision of $E$ into cells of $\SSS$: its cells cover $E$, as those of $\SSS$ cover $B$; a nonempty face of a cell $S\cap E$ is a cell of $\SSS$ contained in $E$; and two cells of $\SSS|_E$ meet in a common face, as cells of $\SSS$ do. Similarly for $\TT|_E$
and $\RR|_E$. The \emph{carrier} of a cell is the smallest face of $B$ that contains it; it exists, as the intersection of all faces of $B$ that contain the cell is such a face. The relative interior of a cell $C$ lies in the relative interior of its carrier. Indeed, if a point of $\relint(C)$ lay in a proper face $G$ of the carrier, an inequality valid on the carrier that cuts out $G$ would be tight at a relative interior point of $C$, hence on all of $C$ (Section~\ref{sec:defs}); so $C\subseteq G$, contrary to the minimality of the carrier. As the
relative interiors of the faces of $B$ partition $B$, the carrier of $C$ is the only face of $B$ whose
relative interior meets $\relint(C)$.

\paragraph{Local terms.}
For a subdivision $\mathcal Y$, let $F_{\mathcal Y}(t)$ be the polynomial
whose coefficient of $t^k$ is the number of $k$-cells of $\mathcal Y$.
For a nonempty $e$-face $E$ of $B$ and $k\geq0$, let $\sigma_E(k)$, $\tau_E(k)$ and $\rho_E(k)$ be the numbers of $k$-cells with carrier $E$ in $\SSS$, $\TT$ and $\RR$, respectively, and put
\[
\delta_{E,k}=\rho_E(k)-\sigma_E(k)-\tau_E(k)+[k=e],\qquad
\delta_E(t)=\sum_k\delta_{E,k}\,t^k ,
\]
where $[k=e]$ is $1$ if $k=e$ and $0$ otherwise. We call $\delta_E(t)$
the \emph{local term} of $E$. The subdivisions $\SSS$, $\TT$ and $\RR$, the polynomials $F_{\mathcal Y}$ and the carriers are those of Fourier \cite[Section~4]{Fourier}. In Fourier's preprint, $\SSS$ and $\TT$ are ``the subdivisions of $B$ into maximal domains on which $L$ and $U$ are affine, together with their faces''; the maximal cells of $\SSS$ are the sets $\{u\in B:L(u)=\ell(u)\}$ with nonempty interior relative to $\aff(B)$, that is, our full-dimensional sets $A_\ell$; and $\RR$ is their common refinement. So Lemma~\ref{lem:identity} and Proposition~\ref{prop:transverse} below apply to them as stated.

\begin{lemma}[{\cite[Lemma~4.1]{Fourier}, first equality}]\label{lem:identity}
For every switch, $F_{\OO_{I\cup\{q\}}(P)}-F_{\OO_I(P)}=F_B+F_\RR-F_\SSS-F_\TT
=\sum_{E}\delta_E(t)$, the sum over all nonempty faces $E$ of $B$.
\end{lemma}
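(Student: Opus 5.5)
The first equality is taken from Fourier's preprint without proof, so the plan is to prove only the second equality,
\[
F_B+F_\RR-F_\SSS-F_\TT=\sum_E\delta_E(t),
\]
by regrouping cells according to their carriers.

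First, express each of the three subdivision polynomials as a sum over faces of $B$. Every cell $C$ of $\SSS$, $\TT$ or $\RR$ has a unique carrier, namely the smallest face of $B$ containing it; its existence was shown in the paragraph on restrictions and carriers. Grouping the $k$-cells of $\SSS$ by carrier gives
\[
F_\SSS(t)=\sum_E\sum_k\sigma_E(k)\,t^k,
\]
and in the same way $F_\TT=\sum_E\sum_k\tau_E(k)t^k$ and $F_\RR=\sum_E\sum_k\rho_E(k)t^k$. Each sum runs over the nonempty faces $E$ of $B$, and a face that carries no cell simply contributes zero.

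Second, write the face polynomial of $B$ in the same form. Since $F_B$ counts the nonempty faces of $B$ by dimension,
\[
F_B(t)=\sum_E t^{\dim E}=\sum_E\sum_k [k=\dim E]\,t^k.
\]

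Finally, add the four expansions with signs $+,+,-,-$ and collect the terms belonging to each face $E$. The coefficient of $t^k$ attached to $E$ is exactly $\delta_{E,k}$, so the total is $\sum_E\delta_E(t)$, which is the second equality.

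The one step needing care is that the carrier is well defined and unique, so that no cell is counted twice or missed. This was established earlier in the paper: the carrier exists because the intersection of all faces of $B$ containing a cell is a face, and it is unique by construction. No finiteness issue arises, because $B$ and all three subdivisions have finitely many cells. The argument is therefore pure bookkeeping.
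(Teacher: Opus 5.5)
Your proposal is correct and follows the paper's own argument: you cite the first equality from Fourier's Lemma~4.1, and you get the second by grouping the cells of $\SSS$, $\TT$ and $\RR$ according to their unique carriers and using $F_B=\sum_E t^{\dim E}$. The paper states this in one line, and your bookkeeping spells out that line accurately.
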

The second equality holds because every cell has a unique carrier and $F_B=\sum_E t^{\dim E}$. In the example
after Proposition~\ref{prop:fibres}, $L$ and $U$ are affine, so $\SSS$, $\TT$ and $\RR$ all consist of the nonempty faces of $B$, and Lemma~\ref{lem:identity} shows that the difference is $0$. 

\paragraph{An informal reading (not used below).} A face of $\OO_I(P)$ that contains no segment parallel to
the $z$-axis is the graph of $L$ over a cell of $\SSS$, or the graph of $U$ over a cell of $\TT$, or
both. After the switch, a face with no such segment is the part of the facet $\{z=0\}$ over a face of $B$, or the graph
of $U-L$ over a cell of $\RR$, or both. In both polytopes, the faces that are of both kinds are those over the faces of $B$ on which $L=U$, so the two polytopes have the same number of them. Thus $F_B+F_\RR-F_\SSS-F_\TT$ counts, dimension by dimension, the change in the number of faces with no segment parallel to the $z$-axis, and the first equality of Lemma~\ref{lem:identity}
is equivalent to the statement that the two polytopes have, in each dimension, the same number of faces that contain such a segment.

\begin{definition}[{\cite[Definition~4.2]{Fourier}}]\label{def:transverse}
The subdivisions $\SSS$ and $\TT$ are \emph{transverse relative to the faces of $B$} if,
for every face $E$ of $B$ and every $u\in\relint(E)$, the unique cells
$S$ of $\SSS|_E$ and $T$ of $\TT|_E$ whose relative interiors contain
$u$ satisfy $\dim(S\cap T)=\dim S+\dim T-\dim E$.
\end{definition}

Although $L$ depends only on the coordinates $x_p$ with $p<q$ and $U$ only
on the coordinates $y_r$ with $r>q$, these coordinates are coupled by the inequalities of $B$,
so the transversality of $\SSS$ and $\TT$ relative to the faces of $B$ needs a proof. The idea of Fourier's proof is this. The walls of $\SSS$ (Section~\ref{sec:intro}) are cut out by affine forms in the coordinates $x_p$ with $p<q$, and those of $\TT$ by forms in the coordinates $y_r$ with $r>q$. Let $E$ be a face of $B$ and $u\in\relint(E)$. If $L(u)=U(u)$, then $L=U$ on $E$, as $U-L$ is concave and nonnegative; so on $E$ the convex function $L$ equals the concave function $U$, hence is affine, and $\SSS|_E$ and $\TT|_E$ consist of the nonempty faces of $E$. Otherwise choose $\theta$ with $L(u)<\theta<U(u)$. Each point $u'$ of $E$ near $u$ splits as $u'=u^-+u^+$ with $u^-\in\theta E$ and $u^+\in(1-\theta)E$, where $u^-$ has the same coordinates $x_p$ ($p<q$) as $u'$ and has $y_r=\theta$ for every $r>q$ \cite[Lemma~5.1 and Step~1 of the proof of Proposition~5.2]{Fourier}. From this splitting Fourier deduces the following. Every vector $d$ parallel to $\aff(E)$ splits in the same way into a sum $d=d^-+d^+$, where the linear parts of the forms of $\TT$ vanish on $d^-$ and those of $\SSS$ vanish on $d^+$. So, restricted to the linear space parallel to $\aff(E)$, the linear parts of the two families of forms span subspaces that meet only in $0$, and the cells of $\SSS$ and $\TT$ through $u$ meet transversely.

\begin{proposition}[{\cite[Proposition~5.2]{Fourier}}]\label{prop:transverse}
The subdivisions $\SSS$ and $\TT$ are transverse relative to the faces of $B$.
\end{proposition}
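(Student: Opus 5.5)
The plan is to turn the idea sketched after Definition~\ref{def:transverse} into a local linear-algebra argument at a single point $u\in\relint(E)$, for a fixed face $E$ of $B$. The case $L(u)=U(u)$ has already been handled above: then $L=U$ is affine on $E$, and $\SSS|_E$, $\TT|_E$ consist of the faces of $E$, so $S=T=E$ and the dimension formula holds trivially. So assume $L(u)<U(u)$ and fix $\theta$ with $L(u)<\theta<U(u)$. Let $V$ be the linear space parallel to $\aff(E)$.

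\textbf{Local description of the cells.} Let $S\in\SSS|_E$ and $T\in\TT|_E$ be the cells with $u$ in their relative interiors. Near $u$, the cell $S$ coincides with $E$ intersected with the hyperplanes $\ell=\ell'$ for the chain sums $\ell,\ell'$ that attain $L(u)$. Indeed, strict inequalities $\ell>\ell_K$ persist nearby, and an inequality active at a relative interior point of $S$ is an equation on $\aff(S)$. Thus $\aff(S)=u+W_S$, where $W_S\subseteq V$ is cut out by linear forms in the coordinates $x_p$ with $p<q$. Likewise $\aff(T)=u+W_T$, with $W_T\subseteq V$ cut out by forms in the coordinates $y_r$ with $r>q$ (including the constant term $1$, whose linear part is $0$).

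\textbf{Transversality from the splitting.} The formula $\dim(S\cap T)=\dim S+\dim T-e$ follows once $W_S+W_T=V$. Then $\dim(W_S\cap W_T)=\dim S+\dim T-e$, and since $u$ lies in the relative interiors of both cells, $S\cap T$ contains a neighbourhood of $u$ in $u+(W_S\cap W_T)$ and lies in that affine space. To get $W_S+W_T=V$ I would prove the splitting: every $d\in V$ can be written $d=d^-+d^+$ with $d^\pm\in V$, where $d^-$ has zero $y_r$-components for all $r>q$ and $d^+$ has zero $x_p$-components for all $p<q$. Then every $\TT$-form vanishes on $d^-$, so $d^-\in W_T$, and every $\SSS$-form vanishes on $d^+$, so $d^+\in W_S$.

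\textbf{Constructing the splitting (main obstacle).} To produce it, for $u'\in E$ near $u$ I would define $u^-$ explicitly. Set $x_p(u^-)=x_p(u')$ for $p<q$ and $y_r(u^-)=\theta$ for $r>q$, and on the remaining coordinates take $\theta$ times $u'$ suitably truncated, for instance $y_r(u^-)=\min(y_r(u'),\theta)$ and $x_p(u^-)=\theta x_p(u')$ for $p\not<q$. Put $u^+=u'-u^-$, and check that $u^-\in\theta E$ and $u^+\in(1-\theta)E$. The checks use that every chain sum below $q$ is at most $L(u')<\theta$, and every $y_r$ with $r>q$ is at least $U(u')>\theta$, for $u'$ near $u$. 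Differentiating the map $u'\mapsto(u^-,u^+)$ along $d\in V$ then gives $d^-$ and $d^+$: the membership in the scaled faces $\theta E$ and $(1-\theta)E$ forces $d^\pm\in V$, and the coordinate pattern of $u^-$ gives the vanishing. I expect the real difficulty to lie in choosing the remaining coordinates of $u^-$ so that \emph{all} defining inequalities of $B$, in particular the mixed chain inequalities $\sum_K x_p\le y_r$ with $K$ not below $q$, hold. The second difficulty is that exactly the inequalities tight on $E$ stay tight for both pieces, so that $u^\pm$ lie in the scaled face $E$ and not merely in the scaled $B$. I would first try the truncation above and verify the inequalities by cases on whether $r>q$ and whether $K$ meets the down-set of $q$. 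If that fails, I would instead define $u^-$ as the point of $\theta E$ nearest to the prescribed coordinates, using a linear-extension argument.
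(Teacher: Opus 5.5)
Your reduction is sound and matches the outline the paper gives after Definition~\ref{def:transverse}. The paper itself does not prove this proposition; it takes it from Fourier. Near $u$ the cells are $u+W_S$ and $u+W_T$, transversality amounts to $W_S+W_T=V$, and this follows from $V=(V\cap\{y_r=0\ \forall r>q\})+(V\cap\{x_p=0\ \forall p<q\})$.

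\textbf{The gap.} That splitting is the whole substance of the proposition (the paper attributes it to Fourier's Lemma~5.1), and you have not established it. The construction you propose is false: rescaling $x_p(u^-)=\theta x_p(u')$ for $p\in I$ with $p\not<q$ breaks the chain inequalities. Take $P=\{p_1,p_2,q\}$ with $p_1<p_2$, $p_1<q$ and $p_2,q$ incomparable, and $I=\{p_1,p_2\}$. Writing $x_i=x_{p_i}$, we get $B=\{(x_1,x_2):x_1,x_2\geq0,\ x_1+x_2\leq1\}$, $L=x_1$ and $U\equiv1$. Let $u'=(0.4,0.6)$, in the relative interior of the edge $E=B\cap\{x_1+x_2=1\}$, and $\theta=\frac12$. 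Your rule gives $u^-=(0.4,0.3)$, whose chain sum $0.7$ exceeds $\theta$, so $u^-\notin\theta B$. The correct pieces are $u^-=(0.4,0.1)$ and $u^+=(0,0.5)$.

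No coordinatewise rescaling can work. The share of $x_p$ that belongs to $u^-$ depends on how much of the level $\theta$ the chains below $p$ already use. Your fallback, the nearest point of $\theta E$, controls neither the prescribed coordinates nor the condition $u'-u^-\in(1-\theta)E$.

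\textbf{The missing idea.} Truncate heights rather than coordinates. For $p\in I$ let $h_p$ be the largest chain sum over chains in $I$ with top element $p$, and let $m_p=\max(\{0\}\cup\{h_{p'}:p'<p\})$, so that $x_p=h_p-m_p$. The map $(x,y)\mapsto(h,y)$ is a bijection from $B$ onto $\OO(P')$. Its inverse is $v\mapsto v_p-\max(\{0\}\cup\{v_{p'}:p'<p\})$ on the chain coordinates and the identity on the order coordinates, and it is positively homogeneous. Set $x_p(u^-)=\min(h_p,\theta)-\min(m_p,\theta)$ and $y_r(u^-)=\min(y_r,\theta)$ for all $r\in J'$.

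Since $\min(\cdot,\theta)$ and $\max(\cdot-\theta,0)$ commute with maxima, $u^-$ and $u^+=u'-u^-$ are the images under the inverse map of $\min(v,\theta)\in\theta\OO(P')$ and $\max(v-\theta,0)\in(1-\theta)\OO(P')$, where $v=(h,y)$. Hence $u^-\in\theta B$ and $u^+\in(1-\theta)B$. For $p<q$ we have $m_p\leq h_p\leq L(u')<\theta$, so $x_p(u^-)=x_p(u')$. For $r>q$ we have $y_r\geq U(u')>\theta$, so $y_r(u^-)=\theta$.

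Your second worry, that the pieces might leave the scaled face, then disappears. The identity $u'=\theta(\theta^{-1}u^-)+(1-\theta)((1-\theta)^{-1}u^+)$ writes $u'$ as a convex combination, with positive weights, of two points of $B$. So both points lie in the face $E$ that contains $u'$.

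Finally, the map $u'\mapsto u^-$ is only piecewise linear. Obtain $d^\pm$ from the differences $\epsilon^{-1}(u^\pm(u+\epsilon d)-u^\pm(u))$ rather than by differentiating.
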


The next lemma records, for each carrier $E$ separately, facts that Fourier \cite[Lemma~4.3]{Fourier} states only after summing over all carriers;
we include the proof.

\begin{lemma}[{\cite[proof of Lemma~4.3, Steps~1--2]{Fourier}}]\label{lem:carrier}
Assume that $\SSS$ and $\TT$ are transverse relative to the faces of $B$.
Let $E$ be an $e$-face of $B$. For a cell $Y$ of $\SSS$ or of $\TT$, let $\RR(Y)$ be the set of
$(\dim Y)$-cells of $\RR$ contained in $Y$.
\begin{enumerate}
\item[(i)] Let $k<e$. For every $k$-cell $Y$ of $\SSS$ or of $\TT$ with
carrier $E$, the set $\RR(Y)$ is nonempty and its members have carrier
$E$. The sets $\RR(Y)$ for all such $Y$ in $\SSS$ and in $\TT$ are pairwise disjoint.
\item[(ii)] Let $G_E$ be the bipartite graph whose vertex set is the
disjoint union of the sets of $e$-cells of $\SSS|_E$ and of $\TT|_E$, with an edge $\{S,T\}$ for
every $e$-cell $S\cap T$ of $\RR|_E$ (each $e$-cell of $\RR|_E$ has this form for exactly one such
pair: the $e$-cells of $\SSS|_E$ and $\TT|_E$ that contain it). Then $G_E$ is connected.
\item[(iii)] We have $\delta_{E,k}\geq0$ for every $k$.
\end{enumerate}
\end{lemma}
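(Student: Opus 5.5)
The plan is to work locally inside $E$, using the transversality of Proposition~\ref{prop:transverse} at relative interior points of the cells, and then to count.

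\emph{Step 1: proof of (i).} Let $Y$ be a $k$-cell of $\SSS$ with carrier $E$ and $k<e$, and pick $u\in\relint(Y)\subseteq\relint(E)$. Let $T$ be the cell of $\TT|_E$ whose relative interior contains $u$. Transversality gives $\dim(Y\cap T)=k+\dim T-e$.
\begin{itemize}
\item[$\bullet$] I first claim that $\dim T=e$. If $T$ were lower dimensional, it would lie in a wall of $\TT$, given by an affine form in the $y_r$ with $r>q$.
\item[$\bullet$] The splitting argument behind Proposition~\ref{prop:transverse} makes the linear spans of the two families of forms independent on the direction space of $E$.
\item[$\bullet$] The obstacle is that this independence alone does not exclude $\dim T<e$; I would exploit it differently. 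Since $Y\cap T\ni u$ and $u$ lies in $\relint(Y)\cap\relint(T)$, the cell $Y\cap T$ of $\RR$ contains $u$ in its relative interior, and $\dim(Y\cap T)\le k$.
\item[$\bullet$] For $\RR(Y)\ne\varnothing$ I need a $k$-cell of $\RR$ inside $Y$. The cells $Y\cap T'$ with $T'\in\TT$ cover $Y$, and there are finitely many of them. Hence one of them has dimension $k$, because finitely many proper affine subspaces cannot cover $Y$.
\end{itemize}
This gives nonemptiness directly; transversality is needed only for the carrier and disjointness statements. A $k$-cell $R\subseteq Y$ of $\RR$ has $\relint(R)\subseteq\relint(Y)\subseteq\relint(E)$, since a full-dimensional subset of $Y$ meets $\relint(Y)$ and the face of $Y$ containing $R$ must be $Y$ itself. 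So the carrier of $R$ is $E$.

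\emph{Step 1, disjointness.} Suppose $R\in\RR(Y)\cap\RR(Y')$. If $Y,Y'$ both lie in $\SSS$, then $\relint(R)\subseteq\relint(Y)\cap\relint(Y')$, which forces $Y=Y'$. If $Y\in\SSS$ and $Y'\in\TT$, both of dimension $k$, take $u\in\relint(R)$. Then $Y,Y'$ are the cells of $\SSS|_E$ and $\TT|_E$ through $u$, and transversality gives $\dim(Y\cap Y')=2k-e<k$. This contradicts $R\subseteq Y\cap Y'$. This is the genuine use of transversality, and I expect it to be the key point.

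\emph{Step 2: proof of (ii).} Each $e$-cell of $\RR|_E$ equals $S\cap T$ with $S,T$ the $e$-cells containing it: take an interior point and the cells of $\SSS|_E$ and $\TT|_E$ through it. For connectivity I argue by contradiction. Suppose the vertex set splits into two nonempty classes with no edges between them, and let $U_1$ be the union of the $e$-cells of one class (both $\SSS$- and $\TT$-cells), $U_2$ the other.
\begin{itemize}
\item[$\bullet$] Both unions cover $E$ within each subdivision, and $E$ is connected, so some point $u$ lies in an $\SSS$-cell $S$ of class 1 and a $\TT$-cell of class 2 (or the reverse).
\item[$\bullet$] The intersection is then lower dimensional, since otherwise there would be an edge.
\item[$\bullet$] Choose such a crossing point generically, namely in the relative interior of a codimension-one cell $W$ (a wall of $\SSS$, say) where the $\TT$-cell through it is $e$-dimensional. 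Transversality applied to $W$ and to an $e$-cell $T$ gives $\dim(W\cap T)=e-1$. The two $\SSS$-cells $S_1,S_2$ adjacent to $W$ then each meet $T$ in an $e$-cell, so $S_1$ and $S_2$ are in the same class as $T$.
\item[$\bullet$] Walking along generic paths in $\relint(E)$ that avoid codimension-$\geq2$ cells, every $e$-cell is thereby linked to its neighbours across walls, which yields connectivity.
\end{itemize}
I expect this genericity argument to be the main obstacle: one must ensure that the path meets only codimension-one cells of $\SSS$ and $\TT$ separately, never simultaneously.

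\emph{Step 3: proof of (iii).}
\begin{itemize}
\item[$\bullet$] For $k<e$, part (i) gives $\rho_E(k)\ge\sigma_E(k)+\tau_E(k)$, so $\delta_{E,k}\ge0$.
\item[$\bullet$] For $k=e$, let $G_E$ have $\sigma_E(e)+\tau_E(e)$ vertices and $\rho_E(e)$ edges. By (ii) it is connected, so it has at least (vertices $-1$) edges, giving $\delta_{E,e}\ge0$.
\item[$\bullet$] For $k>e$ every count is zero.
\end{itemize}
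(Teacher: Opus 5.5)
\textbf{Parts (i) and (iii).} These are correct and coincide with the paper's proof:
\begin{itemize}
\item nonemptiness of $\RR(Y)$ from the covering of $Y$ by the cells $Y\cap T'$;
\item the carrier, from $\relint(R)\subseteq\relint(Y)\subseteq\relint(E)$;
\item disjointness across $\SSS$ and $\TT$, from $\dim(Y\cap Y')=2k-e<k$;
\item the bound ``edges $\geq$ vertices $-1$'' for a connected graph.
\end{itemize}
The claim $\dim T=e$ that you float at the start of Step~1 is false in general. For example, take $Y=B\cap\{\alpha=\beta\}$ and $u=u_0$ in Figure~\ref{fig:slice}. You do not use it, though.

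\textbf{Part (ii): the gap.} The gap sits exactly where the hypothesis of the lemma enters. The one place you cite transversality, $\dim(W\cap T)=e-1$, is vacuous. The $e$-cell $T$ contains a point of $\relint(W)$ in its relative interior, hence a neighbourhood of that point in $\aff(E)$, so this holds for any two subdivisions.

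What really needs transversality is the claim you defer as ``the main obstacle'': that a generic path never meets a wall of $\SSS|_E$ and a wall of $\TT|_E$ at the same point. For subdivisions that are not transverse, both this claim and (ii) fail. Split a square into its left and right halves in both $\SSS$ and $\TT$. Then $G_E$ has the two components $\{S_1,T_1\}$ and $\{S_2,T_2\}$, and every path between the halves crosses both walls at once.

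\textbf{The missing step.} Suppose $x'\in\relint(E)$ lies in $\relint(W)\cap\relint(W')$ for $(e-1)$-cells $W$ of $\SSS|_E$ and $W'$ of $\TT|_E$. Transversality gives $\dim(W\cap W')=e-2$, so $x'$ lies in an $(e-2)$-cell of $\RR|_E$. Now build the segment:
\begin{itemize}
\item take $x$ in the relative interior of an $e$-cell of $\RR|_E$;
\item choose $y$ outside the finitely many proper affine subspaces $\aff(\{x\}\cup C)$, where $C$ runs over the cells of $\SSS|_E$, $\TT|_E$ and $\RR|_E$ of dimension at most $e-2$, and over the walls with $x\in\aff(C)$.
\end{itemize}
Every exceptional point of $[x,y]$ then lies in the relative interior of a wall of one subdivision and of an $e$-cell of the other. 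Your local argument then links the $e$-cells on its two sides.

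Run this path argument directly and drop the contradiction framing. Connectedness of $E$ gives you a cross-class point, but nothing you say lets you choose it in the relative interior of a wall.

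\textbf{Comparison with the paper.} Completed this way, your proof is the paper's in different clothing. The paper walks through the dual graph of $\RR|_E$. It applies transversality at a relative interior point of the common facet $D$ of two adjacent $e$-cells, to see that crossing $D$ changes only $S$ or only $T$. Your segment version lies in $\relint(E)$ from the start, so it saves the paper's separate argument that $D$ has carrier $E$.
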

\begin{proof}
(i) By symmetry we may take $Y\in\SSS$. The cells $Y\cap T$ ($T\in\TT$) of $\RR$ cover $Y$, and finitely many polytopes of
dimension less than $\dim Y$ cannot cover $Y$; so $\RR(Y)\neq\varnothing$. A member $R$ of $\RR(Y)$ satisfies
$R\subseteq Y$ and $\dim R=\dim Y$, so $\relint(R)\subseteq\relint(Y)\subseteq\relint(E)$, and $R$ has
carrier $E$. Distinct cells of
the same subdivision have disjoint relative interiors, so the sets $\RR(Y)$ with $Y\in\SSS$ are pairwise disjoint, and so are those with $Y\in\TT$. If $R\in\RR(S)\cap\RR(T)$ with
$S\in\SSS$ and $T\in\TT$, then a point of $\relint(R)$ lies in
$\relint(S)\cap\relint(T)\cap\relint(E)$, and transversality gives
$\dim(S\cap T)=2k-e<k=\dim R$, impossible as $R\subseteq S\cap T$. In particular (take $S=T=Y$), no $k$-cell
$Y$ with carrier $E$ is a cell of both $\SSS$ and $\TT$.

(ii) An $e$-cell $R$ of $\RR|_E$ is $S\cap T$ with $S\in\SSS|_E$ and $T\in\TT|_E$, and $\dim R=e$ forces
$\dim S=\dim T=e$; an $e$-cell of $\SSS|_E$ that contains $R$ contains $\relint(R)$ in its relative interior,
so it is unique, and likewise for $\TT|_E$. The \emph{dual graph} of the subdivision $\RR|_E$ of $E$ has the $e$-cells as vertices, two of them being adjacent when they share an $(e-1)$-face; it is connected \cite[Lemma~2]{CJK}. We recall the argument. Let $C_1$ and $C_2$ be distinct $e$-cells. Fix $x\in\relint(C_1)$, and choose $y\in\relint(C_2)$ outside the finitely many proper affine subspaces $\aff(\{x\}\cup C)$ of
$\aff(E)$, where $C$ runs over all cells of dimension at most $e-2$ and over those $(e-1)$-cells with $x\in\aff(C)$; this is possible, as $\relint(C_2)$ is a nonempty open subset of $\aff(E)$. The segment $[x,y]$ meets no cell $C$ of these two kinds. Otherwise it meets such a cell $C$ at a point $x^*$, and $x^*\neq x$, since $x$ lies in no cell of dimension less than $e$ (the relative interiors of the cells partition $E$). Then the line through $x$ and $x^*$, which contains $y$, lies in $\aff(\{x\}\cup C)$, a contradiction. Every other $(e-1)$-cell $C$ satisfies $x\notin\aff(C)$, so the segment $[x,y]$ meets the hyperplane $\aff(C)$ of $\aff(E)$, and hence $C$, in at most one point. Hence all but finitely many points of $[x,y]$ lie in relative
interiors of $e$-cells; we call the other points of $[x,y]$ \emph{exceptional}. The relative interiors of the
$e$-cells are disjoint and open in $\aff(E)$, so each open subsegment of $[x,y]$ between two consecutive points of the finite set formed by $x$, $y$ and the exceptional points lies in the relative interior of one $e$-cell. An exceptional point $x'$
lies in the relative interior of an $(e-1)$-cell $D$. The $e$-cells $C_-$ and $C_+$ that contain the subsegments next to $x'$ contain $x'$, so $C_-\cap D$ is a face of $D$ that contains the relative interior point $x'$ of $D$; hence $D\subseteq C_-$, and likewise $D\subseteq C_+$. So $D$ is a common $(e-1)$-face of $C_-$ and $C_+$,
and consecutive $e$-cells along $[x,y]$ are adjacent. The first and the last subsegment lie in $\relint(C_1)$ and in $\relint(C_2)$, respectively, as these sets are open in $\aff(E)$ and contain $x$ and $y$; so $C_1$ and $C_2$ are joined by a path in the dual graph.

Let $S\cap T$ and $S'\cap T'$ be adjacent $e$-cells of
$\RR|_E$, with common $(e-1)$-face $D$. The two cells have disjoint relative interiors and both
have $D$ as a facet. They lie on opposite sides of the hyperplane $\aff(D)$ of $\aff(E)$. Indeed, each lies on one
side, as $D$ is a facet of it. Fix $x_D\in\relint(D)$. Near $x_D$, each cell coincides with the closed half-space bounded by $\aff(D)$ that contains it. A defining inequality of the cell that is tight at $x_D$ is tight on $D$; within $\aff(E)$, it is therefore either tight on the whole cell or its set of solutions is that half-space, and the other defining inequalities remain strict near $x_D$. Thus, if both cells lay on the same side, they would share relative interior points. Hence $D$ lies in no facet of $E$, since otherwise $E$ would lie on one side of $\aff(D)$, yet it contains $e$-cells on both sides. So $D$ lies in no proper face of $E$, and $E$ is the carrier of $D$. Hence $\relint(D)\subseteq\relint(E)$, and transversality applies at the points of $\relint(D)$.
Suppose $S\neq S'$ and $T\neq T'$. Then $S\cap S'$ is a proper face of the $e$-cell $S$, and it contains $D$; so it is an $(e-1)$-cell; likewise $T\cap T'$. As $D$ has dimension $e-1$, a point
of $\relint(D)$ lies in $\relint(S\cap S')$ and in $\relint(T\cap T')$,
and transversality gives $\dim(S\cap S'\cap T\cap T')=e-2<\dim D$, a
contradiction. Thus adjacent $e$-cells of $\RR|_E$ are edges of $G_E$
with a common vertex, and, as the dual graph is connected, all edges of
$G_E$ lie in the same component. Every vertex lies on an edge, since every
$e$-cell of $\SSS|_E$ or $\TT|_E$ contains an $e$-cell of $\RR|_E$ (as in the proof of (i)).
Hence $G_E$ is connected.

(iii) For $k>e$ there are no cells with carrier $E$. For $k<e$, part (i) gives $\rho_E(k)\geq\sigma_E(k)+\tau_E(k)$. For $k=e$, the cells with
carrier $E$ in $\SSS$ and $\TT$ are the $e$-cells of $\SSS|_E$ and $\TT|_E$ (an $e$-cell in $E$ has
carrier $E$), that is, the vertices of $G_E$, and those in
$\RR$ are its edges; a connected graph with $\nu$ vertices has at least $\nu-1$ edges,
so $\delta_{E,e}\geq0$.
\end{proof}

\begin{figure}[!tb]
\centering
\begin{minipage}{0.28\textwidth}
\centering
\begin{tikzpicture}[scale=1.45]
 \fill[gray!8] (-1,-1) rectangle (1,1);
 \draw (-1,-1) rectangle (1,1);
 \draw[very thick] (0,-1) -- (0,1);
 \draw[very thick,dashed] (-1,0) -- (1,0);
 \fill (0,0) circle (0.035);
 \node[above left,inner sep=2.5pt] at (0,0) {\scriptsize $u_0$};
 \node[align=center] at (0.5,0.5) {\footnotesize $L=\alpha$\\ \footnotesize $U=\eta$};
 \node[align=center] at (-0.5,0.5) {\footnotesize $L=\beta$\\ \footnotesize $U=\eta$};
 \node[align=center] at (0.5,-0.5) {\footnotesize $L=\alpha$\\ \footnotesize $U=\gamma$};
 \node[align=center] at (-0.5,-0.5) {\footnotesize $L=\beta$\\ \footnotesize $U=\gamma$};
 \node[right] at (1,0) {\small $\xi$};
 \node[above] at (0,1) {\small $\omega$};
\end{tikzpicture}
\end{minipage}\hfill
\begin{minipage}{0.70\textwidth}
\caption{The subdivisions $\SSS$ and $\TT$ for $P=X$ at the switch of $c$
(Proposition~\ref{prop:X}). Here $B=E_0$ (Section~\ref{subsec:E0}), with coordinates $\alpha=x_a$, $\beta=x_b$, $\gamma=y_g$, $\eta=y_h$; on $B$ we have $L=\max(\alpha,\beta)$ and $U=\min(\gamma,\eta)$. The figure shows the slice of $B$ by the plane
$\{\alpha+\beta=\frac12,\ \gamma+\eta=\frac32\}$ through
$u_0=(\frac14,\frac14,\frac34,\frac34)$, in the coordinates
$\xi=\alpha-\beta$, $\omega=\gamma-\eta$ ($|\xi|,|\omega|\leq\frac12$).
The function $L$ is not affine on any neighborhood of a point with $\xi=0$; these points lie in the $3$-cell $B\cap\{\alpha=\beta\}$ of $\SSS$ (solid line). Likewise, $U$ is not affine on any neighborhood of a point with $\omega=0$; these points lie in the $3$-cell $B\cap\{\gamma=\eta\}$ of $\TT$ (dashed line). The point
$u_0$, the four half-axes and the four quadrants are the slices of the $2$-, $3$- and $4$-dimensional cells of $\RR$ with carrier $B$, respectively, while $\SSS$ and $\TT$ each have two
$4$-cells and a $3$-cell with carrier $B$; so
$\delta_B=(1-0-0)t^2+(4-1-1)t^3+(4-2-2+1)t^4=t^2(1+t)^2$.}
\label{fig:slice}
\end{minipage}
\end{figure}

\begin{lemma}\label{lem:restrict}
Let $E$ be a face of $B$ of dimension $e$. The $e$-cells of $\SSS|_E$
are exactly the maximal $e$-dimensional convex subsets of $E$ on which
$L$ is affine, and every $e$-dimensional convex subset of $E$ on which
$L$ is affine lies in one of them. The same holds for $\TT|_E$ and $U$.
\end{lemma}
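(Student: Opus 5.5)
The plan is to repeat, inside $E$, the argument given above for the maximal cells of $\SSS$ in $B$. Throughout, $A_\ell=\{u\in B:L(u)=\ell(u)\}$ for the chain sums $\ell$. Since $A_\ell\subseteq B$ and a valid inequality of $B$ cutting out $E$ is valid on $A_\ell$, each set $A_\ell\cap E=\{u\in E:L(u)=\ell(u)\}$ is a face of $A_\ell$. I would carry out four steps in this order.

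\emph{Step 1 (only full-dimensional $A_\ell$ matter).} Let $A_\ell\cap E$ be $e$-dimensional, with $A_\ell$ possibly of lower dimension than $B$. The full-dimensional sets $A_{\ell'}$ cover $B$, so finitely many sets $A_{\ell'}\cap E$ cover $A_\ell\cap E$. Hence one of them meets it in an $e$-dimensional set. There $\ell=L=\ell'$, so the affine functions $\ell$ and $\ell'$ agree on $\aff(E)$. It follows that $A_\ell\cap E=A_{\ell'}\cap E$. So the $e$-dimensional sets of the form $A_\ell\cap E$ are exactly the $e$-dimensional sets $A_\ell\cap E$ with $A_\ell$ full-dimensional, i.e.\ with $A_\ell$ a maximal cell of $\SSS$. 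Each such set is therefore an $e$-cell of $\SSS|_E$.

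\emph{Step 2 (every $e$-cell has this form).} Let $C=S\cap E$ be an $e$-cell of $\SSS|_E$, where $S$ is a face of some full-dimensional $A_\ell$. Then $C\subseteq A_\ell\cap E$, which is a cell of $\SSS|_E$ of dimension $e$. Two cells of $\SSS|_E$ meet in a common face, so $C$ is a face of $A_\ell\cap E$. It has full dimension, so $C=A_\ell\cap E$. Thus the $e$-cells of $\SSS|_E$ are precisely the $e$-dimensional sets $A_\ell\cap E$.

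\emph{Step 3 (containment).} Let $D\subseteq E$ be convex and $e$-dimensional, with $L$ affine on $D$. The finitely many convex sets $A_\ell\cap D$ cover $D$. Finitely many proper affine subspaces of $\aff(D)=\aff(E)$ cannot cover $D$, so some $A_\ell\cap D$ is $e$-dimensional. On it $L|_D$ and $\ell$ agree, so they agree on all of $D$. Hence $D\subseteq A_\ell\cap E$, which is $e$-dimensional, and by Steps 1--2 this set is an $e$-cell of $\SSS|_E$. This is the second claim.

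\emph{Step 4 (maximality).} Each $e$-cell $A_\ell\cap E$ is convex and $e$-dimensional, and $L=\ell$ on it. Suppose $D\supseteq A_\ell\cap E$ is convex with $L$ affine on $D$. By Step 3, $D\subseteq A_{\ell''}\cap E$ for some $\ell''$. Then $\ell''=\ell$ on an $e$-dimensional set, hence on $\aff(E)$, so $A_{\ell''}\cap E=A_\ell\cap E$ and $D=A_\ell\cap E$. So every $e$-cell is maximal. Conversely, Step 3 places every maximal such $D$ inside an $e$-cell, so every maximal $D$ equals one.

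For $\TT|_E$ and $U$, the same argument goes through verbatim with the terms $y_r$ and $1$ of $U$ in place of the chain sums. I expect the only delicate point to be Step 1. The set $A_\ell\cap E$ can be $e$-dimensional even when $A_\ell$ is not full-dimensional in $B$, so it is not obviously a cell. The remedy is the observation that two affine functions agreeing on an $e$-dimensional subset of $E$ agree on all of $E$.
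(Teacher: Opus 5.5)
Your proposal is correct and follows essentially the paper's argument: every $e$-cell of $\SSS|_E$ equals $A_\ell\cap E$; an $e$-dimensional convex $D$ on which $L$ is affine lies in some $A_\ell\cap E$ because $L$ and $\ell$ agree on an $e$-dimensional piece; and maximality follows from affine functions agreeing on $\aff(E)$. The one difference is that you cover $D$ by all sets $A_\ell\cap D$, which forces your Step~1. The paper instead covers $E$ only by the sets $A_\ell\cap E$ with $A_\ell$ a maximal cell of $\SSS$ (these already cover $B$), so that detour is unnecessary.
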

\begin{proof}
An $e$-cell of $\SSS|_E$ is a set $S\cap E$ with $S$ a face of a maximal
cell $A_\ell$ of $\SSS$. A valid inequality of $B$ that cuts out $E$ is valid on
$A_\ell\subseteq B$, so $A_\ell\cap E$ is a face of $A_\ell$, and so is
$S\cap E=S\cap(A_\ell\cap E)$. Since $S\cap E$ is an $e$-face of $A_\ell\cap E$,
which has dimension at most $e$, the two are equal: $S\cap E=A_\ell\cap E=\{u\in
E:L(u)=\ell(u)\}$, on which $L$ is affine. Now let $D\subseteq E$ be convex
of dimension $e$ with $L$ affine on $D$. The finitely many sets
$A_\ell\cap E$ with $A_\ell$ a maximal cell of $\SSS$ cover $E$, and their intersections with $D$ are
convex; if all had dimension less than $e$, they would lie in finitely many proper affine subspaces of
$\aff(D)$, which cannot cover $D$. Hence one of them, say $A_\ell\cap E\cap D$, contains a nonempty relatively open subset of $D$;
so $L=\ell$ on $D$,
that is, $D$ lies in $A_\ell\cap E$. This set is a face of $A_\ell$, as shown above, and it has dimension $e$, as it contains $D$; hence it is an $e$-cell of $\SSS|_E$. If two $e$-cells satisfy $A_\ell\cap E\subseteq A_{\ell'}\cap E$, then $\ell=\ell'$ on $\aff(E)$, so the two cells are equal. Hence each $e$-cell of $\SSS|_E$ is a maximal $e$-dimensional convex
subset of $E$ on which $L$ is affine, and each such maximal subset is the $e$-cell that contains it.
For $U$, use the minimum in place of the maximum.
\end{proof}

\begin{proposition}[{\cite[Proposition~6.1]{Fourier}}]\label{prop:mono}
For every switch, $F_{\OO_I(P)}\leq F_{\OO_{I\cup\{q\}}(P)}$.
\end{proposition}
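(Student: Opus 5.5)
The plan is to combine the three results just established. By Lemma~\ref{lem:identity}, for the given switch the difference $F_{\OO_{I\cup\{q\}}(P)}-F_{\OO_I(P)}$ equals $\sum_E\delta_E(t)$, where $E$ runs over all nonempty faces of the base $B$. Since an inequality between polynomials is meant coefficientwise, it suffices to show that every coefficient $\delta_{E,k}$ of every local term is nonnegative. That is exactly Lemma~\ref{lem:carrier}(iii), whose only hypothesis is that $\SSS$ and $\TT$ are transverse relative to the faces of $B$. This hypothesis is supplied by Proposition~\ref{prop:transverse}, which holds for every switch, that is, for every order ideal $I$ and every minimal element $q$ of $J=P\setminus I$.

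Concretely, I would write the chain of steps as follows. First, fix the switch and note that $B$, $L$, $U$, $\SSS$, $\TT$ and $\RR$ are as in Section~\ref{sec:cited}, so Lemma~\ref{lem:identity} and Proposition~\ref{prop:transverse} apply as stated. Second, invoke Proposition~\ref{prop:transverse} to obtain transversality. Third, apply Lemma~\ref{lem:carrier}(iii) to each nonempty face $E$ of $B$ to get $\delta_{E,k}\ge0$ for all $k$. Summing these finitely many nonnegative polynomials gives a polynomial with nonnegative coefficients, and by Lemma~\ref{lem:identity} that polynomial is the difference of the face polynomials. This proves the statement.

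There is no real obstacle left at this point, because the substantive work sits in Lemma~\ref{lem:carrier}: the injection of lower-dimensional cells into $\RR$ for $k<e$, and the connectivity of the bipartite graph $G_E$ for $k=e$. The only care needed is bookkeeping.
\begin{itemize}
\item Every cell of $\SSS$, $\TT$ and $\RR$ is counted exactly once in $\sum_E\delta_E$, through its unique carrier.
\item The term $[k=e]$ accounts for $F_B=\sum_E t^{\dim E}$.
\end{itemize}
Both points are already recorded after Lemma~\ref{lem:identity}.
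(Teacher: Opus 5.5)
Your proposal is correct and follows the paper's own proof: Lemma~\ref{lem:identity} writes the difference as $\sum_E\delta_E(t)$, Proposition~\ref{prop:transverse} supplies the transversality hypothesis of Lemma~\ref{lem:carrier}, and part~(iii) of that lemma gives $\delta_E(t)\geq0$ for every nonempty face $E$ of $B$. Your bookkeeping remarks about unique carriers and the term $[k=e]$ restate the note the paper gives after Lemma~\ref{lem:identity}.
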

\begin{proof}
By Lemma~\ref{lem:identity}, the difference is $\sum_E\delta_E(t)$. By
Proposition~\ref{prop:transverse}, Lemma~\ref{lem:carrier} applies, and part (iii) gives
$\delta_E(t)\geq0$ for every face $E$ of $B$.
\end{proof}

\section{The lower bound}\label{sec:gap}
\begin{theorem}\label{thm:main}
If an $n$-element poset $P$ contains $X$, then
$F_{\CC(P)}-F_{\OO(P)}\geq t^2(1+t)^{n-3}$ coefficientwise. For each
$n\geq5$, some $n$-element poset containing $X$ attains equality
(Proposition~\ref{prop:sharp}).
\end{theorem}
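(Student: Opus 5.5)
We will prove the inequality by telescoping over the switches, following the outline in Section~\ref{sec:intro}. Choose a linear extension of $P$ and switch its elements one by one, so that $F_{\CC(P)}-F_{\OO(P)}$ is the sum of the differences at the $n$ switches. By Proposition~\ref{prop:mono} each difference is $\geq0$, so it suffices to find one switch whose difference is at least $t^2(1+t)^{n-3}$. By Lemma~\ref{lem:identity} this difference is $\sum_E\delta_E(t)$, and by Lemma~\ref{lem:carrier}(iii) every local term is nonnegative. So it is enough to find many faces $E$ of $B$ whose local terms are large.

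\emph{Choice of switch and of $E_0$.} Take a copy $a,b<c<g,h$ of $X$ in $P$. Choose the linear extension so that the order ideal $I$ present when $c$ is switched contains $a$ and $b$ and excludes $g$ and $h$; this works, for instance, with $I=\{p:p<c\}$. In the base $B=\OO_I(P\setminus\{c\})$, define $E_0$ by making tight the inequalities that kill every other coordinate: $x_p=0$ for $p\in I\setminus\{a,b\}$, and for $r\in J\setminus\{c,g,h\}$ set $y_r=1$ if $r$ lies above or is incomparable to $c$, and $y_r=0$ otherwise. The signs must be chosen compatibly with the order inequalities, so that $E_0$ is really a face. The point is that $E_0$ should be the $4$-dimensional face on which $L=\max(x_a,x_b)$ and $U=\min(y_g,y_h)$, as in Figure~\ref{fig:slice}. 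On $E_0$ every chain below $c$ meets $\{a,b\}$ in at most one element, since $a$ and $b$ are incomparable, and every $r>c$ other than $g,h$ has $y_r=1$. The computation of Figure~\ref{fig:slice} then gives $\delta_{E_0}\geq t^2(1+t)^2$. I expect this construction, namely checking that $E_0$ is a face of the right dimension on which $L$ and $U$ take exactly this form (the case $P=X$ is Proposition~\ref{prop:X}), to be the main obstacle.

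\emph{Persistence.} For each $e$-face $E\supseteq E_0$ of $B$, I will show $\delta_E\geq t^{e-2}(1+t)^2$. Pick $u_0\in\relint(E_0)$ with $x_a=x_b$ and $y_g=y_h$, the crossing point of the two walls. Near $u_0$, inside $E$, the function $L$ is not affine across $\{x_a=x_b\}$ and $U$ is not affine across $\{y_g=y_h\}$. Lemma~\ref{lem:restrict} then gives $e$-cells of $\SSS|_E$ on both sides of the first wall and $e$-cells of $\TT|_E$ on both sides of the second, with walls meeting $\relint(E)$. Transversality (Proposition~\ref{prop:transverse}) makes the $(e-1)$-walls intersect in an $(e-2)$-cell of $\RR$ with carrier $E$ that is not a cell of $\SSS$ or $\TT$. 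This contributes $t^{e-2}$ beyond the injection of Lemma~\ref{lem:carrier}(i). Each wall is cut into two $(e-1)$-cells of $\RR$, which gives $2t^{e-1}$ extra. The four $e$-cells around the crossing form a $4$-cycle in $G_E$, so $G_E$ has at least $\nu$ edges and $\delta_{E,e}\geq1$.

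\emph{Counting.} The faces of $B$ containing $E_0$ correspond to the faces of the face figure $B/E_0$, a polytope of dimension $n-5$. A $d$-polytope has at least $\binom{d}{j}$ faces of dimension $j$, because each vertex lies in that many faces of that dimension; in fact a vertex figure argument shows that some vertex of $B/E_0$ already lies in at least $\binom{n-5}{e-4}$ faces of dimension $e-4$. Summing over $e$ gives
\[
\sum_{e=4}^{n-1}\binom{n-5}{e-4}t^{e-2}(1+t)^2=t^2(1+t)^{n-3}.
\]
Sharpness is deferred to Proposition~\ref{prop:sharp}, which uses $F_{\pyr(Q)}=1+(1+t)F_Q$ for the posets $P_n$.
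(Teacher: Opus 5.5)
\textbf{The gap is the choice of switch.} Switching the middle element $c$ of an arbitrary copy of $X$ can change nothing. Take the six-element poset with $a,b<c<c'<g,h$. The only admissible $I$ is $\{a,b\}$. Then $y_{c'}\leq y_g$, $y_{c'}\leq y_h$ and $y_{c'}\leq1$ are defining inequalities of $B$, so $U=y_{c'}$ is affine on $B$. Hence $\TT$ consists of the faces of $B$, $\RR=\SSS$, and the change at this switch is $F_B+F_\SSS-F_\SSS-F_B=0$.

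Your $E_0$ reflects this. It sets $y_{c'}=1$, which forces $y_g=y_h=1$, so $E_0$ is $2$-dimensional and carries no wall of $\TT$. No choice of signs repairs this.

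The missing idea is to switch an element $q\geq c$ that has two incomparable \emph{upper covers} $g',h'$. The paper takes $q$ maximal among the $s\geq c$ whose strict upper set is not a chain (Lemma~\ref{lem:q}). Then no order coordinate other than $y_{g'},y_{h'}$ lies below $g'$ or $h'$. You also need $I=P\setminus{\uparrow}q$ rather than your suggested $\{p:p<c\}$. With your $I$, elements incomparable to $c$ become order coordinates, and then neither value can work. For instance, if $a<d<g$ with $d$ incomparable to $c$, then $y_d=0$ forces $x_a=0$, and $y_d=1$ forces $y_g=1$. With $I=P\setminus{\uparrow}q$ such elements are chain coordinates and can be set to $0$ (Lemma~\ref{lem:E0}).

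\textbf{The persistence step is also only sketched where care is needed.} For a face $E\supsetneq E_0$, the point $u_0$ is not in $\relint(E)$. Moreover, the other chain coordinates need not vanish on $E$. So near $u_0$ one has $L=\max(\alpha+\lambda_a(\zeta),\beta+\lambda_b(\zeta))$ with $\lambda_a,\lambda_b$ only piecewise linear, and the wall of $\SSS|_E$ is in general not $\{x_a=x_b\}$.

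Two things are needed before Lemma~\ref{lem:crossing} applies:
\begin{enumerate}
\item[(1)] the local product structure of $E$ near $u_0$, including the fact that only chains through $a$ or $b$ attain $L$ there and only $y_{g'},y_{h'}$ attain $U$ (Lemma~\ref{lem:product});
\item[(2)] a generic point $w\in\relint(E)$ near $u_0$ at which $\lambda_a$ and $\lambda_b$ are affine.
\end{enumerate}

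A minor point on the counting: the face figure of the $4$-face $E_0$ of the $(n-1)$-polytope $B$ has dimension $n-6$, not $n-5$. What you need is the simplex lower bound $\binom{n-5}{e-4}$ for its $(e-5)$-faces. Your off-by-one dimension happens to give the same number.
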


Lemma~\ref{lem:crossing} bounds the local term of a face of $B$ in which a wall of
$\SSS$ crosses a wall of $\TT$. Section~\ref{subsec:E0} chooses a switch and a $4$-face
$E_0$ of its base such that every face $E\supseteq E_0$ contains such a crossing
(Lemma~\ref{lem:coface-crossing}). Lemma~\ref{lem:coface-count} counts these faces, and
Section~\ref{subsec:sum} adds up the bounds.

\subsection{A local lower bound}
\begin{lemma}\label{lem:crossing}
Let $\SSS$ and $\TT$ be the subdivisions of the base $B$ of a switch, let $E$
be an $e$-face of $B$, and let $w\in\relint(E)$. Assume that some convex set $N\subseteq E$, open in $\aff(E)$ and
containing $w$, meets exactly two $e$-cells $S_1$ and $S_2$ of $\SSS|_E$, and that
$S_1\cap N$ and $S_2\cap N$ are the intersections of $N$ with the two
closed half-spaces of $\aff(E)$ bounded by a hyperplane $H_\SSS$ of $\aff(E)$ through
$w$. Assume the same for two $e$-cells $T_1$ and $T_2$ of $\TT|_E$ and a
hyperplane $H_\TT$ of $\aff(E)$ through $w$, and that
$H_\SSS\neq H_\TT$ (this forces $e\geq2$). Then
$\delta_E(t)\geq t^{e-2}(1+t)^2$.
\end{lemma}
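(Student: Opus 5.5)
We bound the three coefficients $\delta_{E,e-2}$, $\delta_{E,e-1}$ and $\delta_{E,e}$ separately from below by $1$, $2$ and $1$. All other coefficients are nonnegative by Lemma~\ref{lem:carrier}(iii), which applies because of Proposition~\ref{prop:transverse}.

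First we fix the local picture near $w$. Shrinking $N$ if necessary, we may assume $N$ is a small open ball of $\aff(E)$ around $w$; the hypotheses persist for any smaller convex open neighbourhood. Within $N$:
\begin{itemize}
\item the cells of $\SSS|_E$ meeting $N$ are $S_1$, $S_2$ and the $(e-1)$-cell $S_{12}=S_1\cap S_2$, with $S_{12}\cap N=H_\SSS\cap N$;
\item similarly $\TT|_E$ gives $T_1$, $T_2$ and $T_{12}=T_1\cap T_2$, with $T_{12}\cap N=H_\TT\cap N$.
\end{itemize}
Both $S_{12}$ and $T_{12}$ have relative interior points in $N\subseteq\relint(E)$, so their carrier is $E$. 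Since $H_\SSS\ne H_\TT$ are hyperplanes through $w$, the four sets $S_i\cap T_j\cap N$ are the four closed "quadrants" cut out by two distinct hyperplanes, each of dimension $e$. The two sets $S_{12}\cap T_j$ and $S_i\cap T_{12}$ each meet $N$ in an $(e-1)$-dimensional half-hyperplane, and $S_{12}\cap T_{12}$ meets $N$ in $H_\SSS\cap H_\TT\cap N$, of dimension $e-2$. All of these are cells of $\RR$ whose relative interiors meet $N\subseteq\relint(E)$, so all have carrier $E$, and they are pairwise distinct because their relative interiors near $w$ are disjoint.

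We now bound each coefficient.
\begin{itemize}
\item \emph{Coefficient of $t^{e-2}$.} The cell $S_{12}\cap T_{12}$ is an $(e-2)$-cell of $\RR$ with carrier $E$. It is not contained in $\RR(Y)$ for any $(e-2)$-cell $Y$ of $\SSS$ or $\TT$: such a $Y$ would contain $\relint(S_{12}\cap T_{12})$ in its relative interior, whereas near $w$ the only cells of $\SSS$ are $S_1,S_2,S_{12}$, all of larger dimension. The count in Lemma~\ref{lem:carrier}(i) injects $\bigcup_Y\RR(Y)$ into the $(e-2)$-cells of $\RR$ with carrier $E$, and this extra cell gives $\delta_{E,e-2}\ge1$.
\item \emph{Coefficient of $t^{e-1}$.} Inside $S_{12}$ there are two distinct $(e-1)$-cells $S_{12}\cap T_1$ and $S_{12}\cap T_2$ of $\RR(S_{12})$. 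Similarly $\RR(T_{12})$ contains $S_1\cap T_{12}$ and $S_2\cap T_{12}$. The disjointness in Lemma~\ref{lem:carrier}(i) lets us refine the count to $\rho_E(e-1)\ge\sum_Y|\RR(Y)|$. Since $|\RR(S_{12})|\ge2$ and $|\RR(T_{12})|\ge2$ while every other $|\RR(Y)|\ge1$, we get $\delta_{E,e-1}\ge 2$.
\item \emph{Coefficient of $t^e$.} In the graph $G_E$ of Lemma~\ref{lem:carrier}(ii), the four edges $S_iT_j$ form a $4$-cycle $S_1T_1S_2T_2$. A connected graph containing a cycle has at least as many edges as vertices, so $\rho_E(e)\ge\sigma_E(e)+\tau_E(e)$, i.e.\ $\delta_{E,e}\ge1$.
\end{itemize}

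The main obstacle is the local bookkeeping: we must check rigorously that the cells we exhibit are genuinely distinct cells of $\RR$ with carrier $E$ and that the identifications with the half-spaces are correct. This rests on the partition of $E$ into relative interiors of cells and on the given description of $S_i\cap N$ and $T_j\cap N$. For instance, $S_{12}\cap N=H_\SSS\cap N$ follows because $S_1\cap S_2$ is a common face of $S_1$ and $S_2$ and equals the intersection of the two half-spaces within $N$. Distinctness of the two members of $\RR(S_{12})$ follows from their relative interiors lying on opposite sides of $H_\TT$ within $H_\SSS\cap N$, and $H_\SSS\cap N\not\subseteq H_\TT$ because $H_\SSS\ne H_\TT$.
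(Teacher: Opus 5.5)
Your proposal is correct and follows the paper's proof essentially step by step. It uses the same cells $S_1\cap S_2$, $T_1\cap T_2$ and their $(e-2)$-dimensional intersection, the two members of each of $\RR(S_1\cap S_2)$ and $\RR(T_1\cap T_2)$, and the $4$-cycle $S_1T_1S_2T_2$ in $G_E$, with Lemma~\ref{lem:carrier} supplying the remaining inequalities. The only difference is in the $t^{e-2}$ step. The paper concludes directly from $w\in\relint(Y)$ that $Y$ is $S_1\cap S_2$ or $T_1\cap T_2$, while you use the list of cells meeting $N$. That list should be stated for $\SSS|_E$ rather than $\SSS$, because for a proper face $E$ full-dimensional cells of $\SSS$ also meet $N$; this costs nothing, since $Y$ has carrier $E$ and so is a cell of $\SSS|_E$.
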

Figure~\ref{fig:slice} shows this situation for $P=X$.

\begin{proof}
By Proposition~\ref{prop:transverse}, Lemma~\ref{lem:carrier} applies.
The common face $S_w=S_1\cap S_2$ meets $N$ in $H_\SSS\cap N$, a
neighborhood of $w$ in $H_\SSS$. As $S_1\neq S_2$, the face $S_w$ of $S_1$ is proper: otherwise $S_1$ would be an $e$-dimensional face of the $e$-cell $S_2$, and so $S_1=S_2$. Hence $\dim S_w\leq e-1$; as $S_w$ contains the relatively open subset $H_\SSS\cap N$ of $H_\SSS$, we get
$\dim S_w=e-1$, $\aff(S_w)=H_\SSS$ and $w\in\relint(S_w)$. So $S_w$ is the $(e-1)$-cell of
$\SSS|_E$ whose relative interior contains $w$. Likewise for
$T_w=T_1\cap T_2$ and $H_\TT$. Both have carrier $E$, as
$w\in\relint(E)$.

\emph{The coefficient of $t^{e-2}$.} The cell $R_w=S_w\cap T_w$ of $\RR$
meets $N$ in $H_\SSS\cap H_\TT\cap N$. As $R_w\subseteq H_\SSS\cap H_\TT$, an affine space of dimension
$e-2$ in which $H_\SSS\cap H_\TT\cap N$ is open, $R_w$ has dimension $e-2$ and
$w\in\relint(R_w)$; hence its carrier is $E$. Suppose $R_w\in\RR(Y)$ for
an $(e-2)$-cell $Y$ of $\SSS$ or of $\TT$ with carrier $E$. Then
$w\in\relint(R_w)\subseteq\relint(Y)$, so $Y=S_w$ or $Y=T_w$, whose dimension is
$e-1$, a contradiction. Thus $R_w$ lies in none of the
sets $\RR(Y)$ of Lemma~\ref{lem:carrier}(i), and
$\rho_E(e-2)\geq\sigma_E(e-2)+\tau_E(e-2)+1$.

\emph{The coefficient of $t^{e-1}$.} Since $H_\SSS\neq H_\TT$ and both
contain $w$, the set $H_\SSS\cap N$ meets both open sides of $H_\TT$ in $N$,
that is, $(T_1\setminus T_2)\cap N$ and $(T_2\setminus T_1)\cap N$. So $S_w\cap T_1$
and $S_w\cap T_2$ are distinct cells of $\RR$ in $S_w$, each containing
a relatively open subset of $H_\SSS$, hence of dimension $e-1$. In the same way, $T_w\cap S_1$ and
$T_w\cap S_2$ are distinct $(e-1)$-cells of $\RR$ in $T_w$. Thus
$|\RR(S_w)|\geq2$ and $|\RR(T_w)|\geq2$. By Lemma~\ref{lem:carrier}(i), the sets $\RR(Y)$, where $Y$ runs over the $(e-1)$-cells of $\SSS$ and of $\TT$ with carrier $E$, are nonempty and pairwise disjoint, and their members are $(e-1)$-cells of $\RR$ with carrier $E$. Hence $\rho_E(e-1)\geq\sigma_E(e-1)+\tau_E(e-1)+2$.

\emph{The coefficient of $t^{e}$.} As $H_\SSS\neq H_\TT$, each of the four
sets $S_i\cap T_j$ contains points of $N$ that lie on neither hyperplane. Each
$S_i\cap T_j$ is a cell of $\RR$ by the definition of the common
refinement; it has dimension $e$ since it contains a nonempty open subset of $N$, so
it is an edge of $G_E$, and $G_E$ contains the cycle $S_1T_1S_2T_2$. As
$G_E$ is connected (Lemma~\ref{lem:carrier}(ii)) and has a cycle, it has
at least as many edges as vertices: $\rho_E(e)\geq\sigma_E(e)+\tau_E(e)$,
that is, $\delta_{E,e}\geq1$. The other coefficients are nonnegative by
Lemma~\ref{lem:carrier}(iii). Adding these bounds gives
$\delta_E(t)\geq t^{e-2}+2t^{e-1}+t^e=t^{e-2}(1+t)^2$.
\end{proof}

\subsection{Construction of the face \texorpdfstring{$E_0$}{E0}}\label{subsec:E0}
Let $P$ contain $X$, let $a,b,c,g,h$ be five elements of $P$ with $a,b<c<g,h$ as in Section~\ref{sec:intro}, and write ${\uparrow}p=\{r\in P:r\geq p\}$ for
$p\in P$. Choose $q$ maximal among the elements $s\geq c$ of $P$ whose \emph{strict upper set} $\{r\in P:r>s\}$ is not a chain. The element $c$ is one of them, since $g$ and $h$ lie above $c$ and are incomparable. We cannot always take $q=c$: the construction of $E_0$ below needs
two incomparable upper covers of $q$, and $c$ may have only one, for example when an element $c'$ with $c<c'<g,h$ is the only upper cover of $c$.

\begin{lemma}\label{lem:q}
The element $q$ has at least two upper covers, and any two of them are
incomparable. The set $I=P\setminus{\uparrow}q$
is an order ideal, $q$ is minimal in $J={\uparrow}q$, and $a,b\in I$
with $a,b<q$.
\end{lemma}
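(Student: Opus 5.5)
We first record that $q$ is well defined and then check the four claims in order. The set of elements $s\geq c$ whose strict upper set is not a chain is nonempty, since $c$ belongs to it. It is finite, so it has a maximal element $q$, and $q\geq c$.

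\emph{Two upper covers, pairwise incomparable.} Since the strict upper set $\{r:r>q\}$ is not a chain, it contains two incomparable elements $r_1,r_2$. Each $r_i$ lies above some upper cover $r_i'$ of $q$: take a minimal element of the finite nonempty set $\{r:q<r\leq r_i\}$.

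If $q$ had only one upper cover $q'$, then every $r>q$ would satisfy $r\geq q'$. Both $r_1$ and $r_2$ are above $q$ and incomparable, so neither equals $q'$; hence $r_1,r_2>q'$. Then the strict upper set of $q'$ is not a chain, and $q'>q\geq c$, which contradicts the maximality of $q$. So $q$ has at least two upper covers.

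Now let $r,r'$ be two distinct upper covers of $q$. If $r<r'$, then $q<r<r'$, so $r'$ would not cover $q$. The case $r'<r$ is symmetric, so $r$ and $r'$ are incomparable.

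\emph{The ideal and the minimality of $q$.} The set ${\uparrow}q$ is an upper set, so its complement $I$ is an order ideal, and $J={\uparrow}q$. The element $q$ is the least element of $J$, hence minimal in $J$.

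\emph{The elements $a$ and $b$.} We have $a<c\leq q$, so $a<q$. If $a\in{\uparrow}q$, then $a\geq q>a$, which is impossible; so $a\in I$. The same argument applies to $b$.

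The only step needing care is the first one. There we use that every element strictly above $q$ dominates some upper cover of $q$; this holds by finiteness of $P$. We also use that $q'>q\geq c$, so that $q'$ is itself a candidate in the definition of $q$.
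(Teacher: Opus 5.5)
Your proof is correct and follows essentially the same route as the paper's. You use the existence of upper covers below any $r>q$ (by finiteness), derive a contradiction with the maximality of $q$ when $q$ has exactly one upper cover, use the covering argument for incomparability, and make the same direct checks for $I$, $J$, $a$ and $b$.
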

\begin{proof}
The strict upper set of $q$ is not a chain, hence nonempty. As $P$ is finite, for every $r>q$ the set $\{s':q<s'\leq r\}$ has a minimal element $s$; it is an upper cover of $q$, and $r\geq s$. So $q$ has an upper
cover, and if $q$ had only
one upper cover $q^+$, every element $r>q$ would satisfy
$r\geq q^+$. Then any two incomparable elements above $q$ would lie strictly above $q^+$. Indeed, neither of them is $q^+$, as $q^+$ is comparable to every element above $q$. So the strict upper set of $q^+$ would not be a chain, and $q^+$, which satisfies $q^+>q\geq c$, would contradict the maximality of $q$. Two distinct upper covers are
incomparable: if $g'<h'$ for upper covers $g',h'$ of $q$, then $q<g'<h'$, so $h'$
does not cover $q$. As ${\uparrow}q$ is an upper set, its complement $I$ is an order ideal, and $q$, the minimum of ${\uparrow}q$, is minimal in $J$. Finally, since $a,b<c\leq q$, neither $a$ nor $b$ lies in ${\uparrow}q$, that is, $a,b\in I$.
\end{proof}

Fix two distinct upper covers $g'$ and $h'$ of $q$; they need not be $g$ and $h$. Consider the switch of $q$
with this $I$, its base $B=\OO_I(P\setminus\{q\})$, of dimension $n-1$,
and its functions $L$ and $U$. The coordinates of $B$ are $x_p$ for
$p\in I$ and $y_r$ for $r\in J'=J\setminus\{q\}$; every $r\in J'$
satisfies $r>q$. Put $\alpha=x_a$, $\beta=x_b$, $\gamma=y_{g'}$,
$\eta=y_{h'}$, and let $\zeta$ denote the vector of all other
coordinates of $B$; it ranges over the \emph{$\zeta$-space} $\mathbb R^Z$, where
$Z=(I\setminus\{a,b\})\cup(J'\setminus\{g',h'\})$. Let $\zeta_0$ be the point of $\mathbb R^Z$ with $x_p=0$ for
$p\in I\setminus\{a,b\}$ and $y_r=1$ for $r\in J'\setminus\{g',h'\}$,
and let $E_0$ be the set of points of $B$ with $\zeta=\zeta_0$.
On $E_0$ all other chain coordinates vanish and all other order coordinates equal $1$,
so $L=\max(\alpha,\beta)$ and $U=\min(\gamma,\eta)$ there, and $E_0$ is a copy of the base
of the switch of $c$ in $X$ (Figure~\ref{fig:slice}). Lemmas~\ref{lem:product}
and~\ref{lem:coface-crossing} carry the crossing of the walls $E_0\cap\{\alpha=\beta\}$ and $E_0\cap\{\gamma=\eta\}$ to every face of $B$ that contains $E_0$.

\begin{lemma}\label{lem:E0}
We have $E_0=\{(\alpha,\beta,\gamma,\eta):\alpha,\beta\geq0,\
\max(\alpha,\beta)\leq\min(\gamma,\eta),\ \gamma,\eta\leq1\}
\times\{\zeta_0\}$, a $4$-face of $B$.
\end{lemma}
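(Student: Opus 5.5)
The plan is to identify $E_0$ as the set of points of $B$ at which a specific family of defining inequalities of $B$ is tight, and then to compute which inequalities of $B$ survive once $\zeta=\zeta_0$ is substituted.

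\emph{$E_0$ is a face.} Consider the inequalities $x_p\geq0$ for $p\in I\setminus\{a,b\}$ and $y_r\leq1$ for $r\in J'\setminus\{g',h'\}$. Each is a defining inequality of $B=\OO_I(P\setminus\{q\})$. For $p\in I$, the chain $\{p\}$ gives the chain inequality $x_p\leq1$, which is harmless here. The set where all of these inequalities are tight is exactly $\{u\in B:\zeta=\zeta_0\}=E_0$. By the facts in Section~\ref{sec:defs}, it is therefore a face of $B$.

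\emph{Description of $E_0$.} Substitute $\zeta=\zeta_0$ into each defining inequality of $B$.
\begin{itemize}
\item Nonnegativity $x_p\geq0$ and the bounds $0\leq y_r\leq1$ reduce to $\alpha,\beta\geq0$ and $0\leq\gamma,\eta\leq1$; the others become trivial.
\item Order inequalities $y_r\leq y_{r'}$ with $r<r'$ in $J'$ become one of: $1\leq1$; $\gamma\leq1$ or $\eta\leq1$ (when $r\in\{g',h'\}$ and $r'$ is not); or $1\leq\gamma$ (when $r\notin\{g',h'\}$ and $r'=g'$). The last case cannot occur, since every $r\in J'$ satisfies $r>q$ and $g'$ covers $q$. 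Likewise $g'$ and $h'$ are incomparable by Lemma~\ref{lem:q}, so no inequality relates $\gamma$ and $\eta$.
\item Chain inequalities $\sum_{p\in K}x_p\leq y_r$ and $\sum_{p\in K}x_p\leq1$ become $\ell\leq y_r$ or $\ell\leq1$, where $\ell$ is $0$, $\alpha$, $\beta$ or $\alpha+\beta$. The value $\alpha+\beta$ needs $a,b$ to lie in a common chain, which they do not, since $a$ and $b$ are incomparable. The inequality $\alpha\leq\gamma$ arises from the chain $\{a\}$ and $r=g'$, because $a<c\leq q<g'$. In the same way we obtain $\alpha,\beta\leq\gamma,\eta$, and every remaining chain inequality is implied by $\alpha,\beta\leq1$.
\end{itemize}
Together these give exactly the displayed set $\{\alpha,\beta\geq0,\ \max(\alpha,\beta)\leq\min(\gamma,\eta),\ \gamma,\eta\leq1\}\times\{\zeta_0\}$.

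\emph{Dimension.} The point $u_0=(\tfrac14,\tfrac14,\tfrac34,\tfrac34)$ with $\zeta=\zeta_0$ satisfies all four-variable inequalities strictly. A small open ball around it in the $(\alpha,\beta,\gamma,\eta)$-coordinates therefore lies in $E_0$, so $\dim E_0=4$.

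The main care point is the case check for the order and chain inequalities. In particular, we must confirm that no inequality forces $\gamma$ or $\eta$ to equal $1$. This is where we use that $g'$ and $h'$ are upper covers of $q$ and that every element of $J'$ lies above $q$.
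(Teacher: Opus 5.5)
Your proposal is correct and follows essentially the same route as the paper. You exhibit $E_0$ as the set where the inequalities $x_p\geq0$ ($p\in I\setminus\{a,b\}$) and $y_r\leq1$ ($r\in J'\setminus\{g',h'\}$) are all tight (the paper uses their sum). You then reduce the chain and order inequalities at $\zeta=\zeta_0$ using that $a,b$ are incomparable, that $a,b<q<r$ for every $r\in J'$, and that $g',h'$ cover $q$, and you get $\dim E_0=4$ from the strictly interior point $u_0$.

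One small slip: chains containing neither $a$ nor $b$ give $0\leq\gamma$ and $0\leq\eta$. These are not implied by $\alpha,\beta\leq1$, but they follow from $0\leq\alpha\leq\min(\gamma,\eta)$. You need that same observation anyway to drop the bounds $\gamma,\eta\geq0$ and land exactly on the displayed set.
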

\begin{proof}
The inequality $\sum_{p\in I\setminus\{a,b\}}x_p+\sum_{r\in J'\setminus\{g',h'\}}(1-y_r)\geq0$ is valid on $B$
and is tight exactly where $\zeta=\zeta_0$, so $E_0$ is a face of $B$. On $E_0$ every $x_p$ with $p\notin\{a,b\}$ vanishes, and a chain contains at most one of
$a$ and $b$; so each chain inequality of $B$ reads $\alpha\leq y_r$, $\beta\leq y_r$ or $0\leq y_r$ for some
$r\in J'$, or $\alpha\leq1$, $\beta\leq1$ or $0\leq1$. Since $a,b<q<r$ for every $r\in J'$, the chains $\{a\}$
and $\{b\}$ give all the inequalities $\alpha\leq y_r$ and $\beta\leq y_r$. As $y_r\in\{\gamma,\eta,1\}$ on
$E_0$, the chain inequalities other than $0\leq y_r$ and $0\leq1$ amount to $\max(\alpha,\beta)\leq\min(\gamma,\eta,1)$. An order inequality $y_r\leq y_{r'}$ ($r<r'$
in $J'$) becomes $1\leq1$, $\gamma\leq1$ or $\eta\leq1$ unless
$r'\in\{g',h'\}$, which is impossible, as $q<r<r'$ and both $g'$ and $h'$ cover $q$.
The inequalities $x_p\geq0$ and $y_r\leq1$ give $\alpha,\beta\geq0$ and $\gamma,\eta\leq1$; the inequalities
$\gamma,\eta\geq0$, from $y_r\geq0$, are redundant, as $0\leq\alpha\leq\min(\gamma,\eta)$. So, up to redundant inequalities, exactly the stated system remains, and it is strictly satisfied at $(\frac14,\frac14,\frac34,\frac34)$; hence $\dim E_0=4$.
\end{proof}

Let $u_0$ be the point of $E_0$ with
$(\alpha,\beta,\gamma,\eta)=(\frac14,\frac14,\frac34,\frac34)$. Let $K$ be a chain as in the definition of $L$, with $a\in K$. Then $b\notin K$, as $a$ and $b$ are incomparable, so $K\setminus\{a\}\subseteq I\setminus\{a,b\}$, and the linear form $\sum_{p\in K\setminus\{a\}}x_p$ depends only on $\zeta$. These forms, for all such $K$, make up the \emph{$a$-family}. Let $\lambda_a(\zeta)$ be their largest value at $\zeta$; the $b$-family and $\lambda_b$ are defined in the same way. All these forms vanish at
$\zeta_0$, so $\lambda_a(\zeta_0)=\lambda_b(\zeta_0)=0$. Near $u_0$, each face $E\supseteq E_0$ of $B$ is the product of an open
set in the coordinates $(\alpha,\beta,\gamma,\eta)$ and a polyhedron in the $\zeta$-space; the next lemma makes this precise
and gives $L$ and $U$ near $u_0$.

\begin{lemma}\label{lem:product}
There are open neighborhoods $V_4$ of $(\frac14,\frac14,\frac34,\frac34)$ in $\mathbb R^4$ and $V_\zeta$ of $\zeta_0$ in the $\zeta$-space such that, for $N_0=V_4\times V_\zeta$, the following hold.
\begin{enumerate}
\item[(a)] For every face $E\supseteq E_0$ of $B$,
\begin{equation}\label{eq:product}
E\cap N_0=V_4\times(Q_E\cap V_\zeta),
\end{equation}
where $Q_E$ is the set of points of the $\zeta$-space that satisfy the defining inequalities of $B$ that are tight at $u_0$, with those that are tight on all of $E$ taken as equations (these inequalities involve only $\zeta$, and $Q_E$ is a polyhedron that contains $\zeta_0$); so
$\relint(E)\cap N_0=V_4\times(\relint(Q_E)\cap V_\zeta)$.
\item[(b)] On $B\cap N_0$ we have
$L=\max\bigl(\alpha+\lambda_a(\zeta),\beta+\lambda_b(\zeta)\bigr)$ and
$U=\min(\gamma,\eta)$.
\end{enumerate}
\end{lemma}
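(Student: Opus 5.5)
The plan is to sort the defining inequalities of $B$ by whether they are tight at $u_0$, and to choose $V_4$ and $V_\zeta$ so small that every inequality not tight at $u_0$ stays strict on $N_0$. At $u_0$ we have $\alpha=\beta=\frac14$, $\gamma=\eta=\frac34$, every other chain coordinate $0$ and every other order coordinate $1$.

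First I list the tight inequalities at $u_0$. The inequalities $x_p\ge0$ are tight exactly for $p\in I\setminus\{a,b\}$, and the inequalities $y_r\le1$ exactly for $r\in J'\setminus\{g',h'\}$; these involve only $\zeta$.

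Order inequalities $y_r\le y_{r'}$ with $r<r'$ in $J'$ have $r'\notin\{g',h'\}$, as in Lemma~\ref{lem:E0}. They are tight at $u_0$ exactly when $r\notin\{g',h'\}$, and then they involve only $\zeta$. When $r\in\{g',h'\}$ they read $\frac34\le1$, which is strict.

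A chain inequality $\sum_K x_p\le y_r$ has value $\frac14$ or $0$ on the left, against $y_r\in\{\frac34,1\}$, so it is never tight. A chain inequality $\sum_K x_p\le1$ has left side at most $\frac14$, so it is not tight either. The inequalities $y_r\ge0$ are strict. Hence every tight inequality involves only $\zeta$, as the statement claims, and $Q_E$ contains $\zeta_0$.

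Since there are finitely many strict inequalities and each is continuous, there is a neighborhood $N_0=V_4\times V_\zeta$ of $u_0$ on which all of them remain strict. On $N_0$, the set $B$ is therefore cut out by the tight ones alone, so $B\cap N_0=V_4\times(Q_B\cap V_\zeta)$.

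For a face $E\supseteq E_0$ of $B$, the facts from Section~\ref{sec:defs} give that $E$ is the set of points of $B$ where the defining inequalities tight on $E$ are tight. Each of these is tight at $u_0\in E$, so it belongs to the $\zeta$-only list. This gives \eqref{eq:product}, with $Q_E$ cut out by the tight inequalities, those tight on $E$ taken as equations.

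For the relative-interior statement, I need the fact that $\relint(E)$ is obtained by making every remaining defining inequality strict. On $N_0$ the non-tight ones are already strict, so $\relint(E)\cap N_0$ is $V_4$ times the set where the $\zeta$-inequalities of $Q_E$ not taken as equations are strict. I still have to identify that set with $\relint(Q_E)\cap V_\zeta$. This is the one subtle point: an inequality might be tight on all of $Q_E$ without being tight on $E$. That cannot happen, because $E\cap N_0$ is a product with the open set $V_4$. If such an inequality were tight on $Q_E\cap V_\zeta$, it would be tight on $E\cap N_0$, hence on the relatively open set $\relint(E)\cap N_0$ (nonempty, since $E\cap N_0$ is a nonempty relatively open piece of $E$), and so on all of $E$. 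It would then have been taken as an equation. Strictness near $\zeta_0$ is also inherited, because a polyhedron's relative interior is where its non-implicit inequalities are strict.

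For part (b), split each chain $K$ in the definition of $L$ into three types: those containing $a$, those containing $b$, and those containing neither. These are disjoint because $a$ and $b$ are incomparable. The first type gives $\alpha+\lambda_a(\zeta)$ at its maximum, the second gives $\beta+\lambda_b(\zeta)$, and the third gives a form in $\zeta$ alone that vanishes at $\zeta_0$. After shrinking $V_\zeta$ and $V_4$ so that every such third-type form is less than $\frac18<\min(\alpha,\beta)$, the third type never attains the maximum.

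For $U$, the terms $y_r$ with $r\notin\{g',h'\}$, and the constant $1$, are close to $1$, while $\gamma$ and $\eta$ are close to $\frac34$. After shrinking once more, $U=\min(\gamma,\eta)$ on $N_0$. All of these shrinkings are finitely many, and they keep the product form of $N_0$.
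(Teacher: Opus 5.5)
Your proof is correct and follows essentially the paper's route: you classify the defining inequalities of $B$ at $u_0$, choose a product neighborhood on which the strict ones stay strict, use that a face is cut out by the inequalities tight on it, and get (b) by continuity. The one difference is the relative-interior step. You use the implicit-equality description of $\relint$ of a polyhedron and check that no inequality is an implicit equality of $Q_E$ without being tight on $E$; the paper instead shows $\aff(E)=\mathbb R^4\times\aff(Q_E)$ and reads off the relative interior of the product, which avoids that fact (not among the paper's listed standard facts).
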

\begin{proof}
(a) Every defining inequality of $B$ that involves $\alpha$, $\beta$, $\gamma$ or $\eta$ is strict at $u_0$. Indeed, at $u_0$ every chain sum is at most $\frac14$ (a chain
contains at most one of $a$ and $b$, and all other $x_p$ vanish), while every $y_r$ equals $\frac34$ or $1$; so the inequalities $\alpha\geq0$, $\beta\geq0$, $0\leq\gamma\leq1$, $0\leq\eta\leq1$ and all chain inequalities that involve any of these four coordinates are strict at $u_0$. An order inequality that involves $\gamma$ has the form $\gamma\leq y_{r'}$ with
$g'<r'$, since no $r\in J'$ satisfies $r<g'$ (each $r\in J'$ satisfies $r>q$, and $g'$ covers $q$); and $r'\neq h'$, as $g'$ and $h'$
are incomparable, so $y_{r'}=1>\frac34$ at $u_0$. The same holds for $\eta$. So the inequalities
that are tight at $u_0$ involve only $\zeta$. A face $E\supseteq E_0$ is cut out by the defining inequalities of $B$ that are tight on $E$ (Section~\ref{sec:defs}), and these are tight at $u_0$.
Take $V_4$ and $V_\zeta$ so small that all inequalities that are strict at $u_0$ remain strict on $N_0=V_4\times V_\zeta$; this choice does not depend on $E$. Then a point of $N_0$ lies in $E$ if and only if it
satisfies the inequalities that are tight at $u_0$, with those tight on $E$ taken as equations. As these
involve only $\zeta$, this is \eqref{eq:product}. Every point of $E$ satisfies the inequalities that
define $Q_E$, so $E\subseteq\mathbb R^4\times Q_E$. Since $Q_E$ is convex and $V_\zeta$ is a neighborhood of its point $\zeta_0$, every segment from $\zeta_0$ to a point of $Q_E$ has an initial subsegment in $Q_E\cap V_\zeta$, so $\aff(Q_E\cap V_\zeta)=\aff(Q_E)$. Together with \eqref{eq:product}, this gives $\aff(E)=\mathbb R^4\times\aff(Q_E)$. As $N_0$ is open, $\relint(E)\cap N_0$ is the interior of $E\cap N_0$
relative to $\aff(E)$, which by \eqref{eq:product} is $V_4\times(\relint(Q_E)\cap V_\zeta)$.

(b) At $u_0$ a chain has sum $\frac14$ if it contains $a$ or $b$, and sum $0$ otherwise, the empty chain included. The terms of $U$ other than $\gamma$
and $\eta$ equal $1$, which exceeds $\frac34$. By continuity, after shrinking $N_0$ (which keeps (a) true), on all of $N_0$ every chain that contains $a$ or $b$ has a larger sum than every chain that contains neither, and $\gamma$ and $\eta$ are smaller than the other terms of $U$. Hence, on $B\cap N_0$, only chains containing $a$ or $b$ attain $L$, and only $\gamma$ or $\eta$ attains $U$. For a chain $K$ that contains $a$ we have
$\ell_K=\alpha+\sum_{p\in K\setminus\{a\}}x_p$, so the maximum over these chains is $\alpha+\lambda_a(\zeta)$;
likewise for $b$.
\end{proof}

\begin{lemma}\label{lem:coface-crossing}
Let $E\supseteq E_0$ be a face of $B$. Then there are $w\in\relint(E)$, a set $N$, cells $S_1,S_2,T_1,T_2$
and hyperplanes $H_\SSS,H_\TT$ that satisfy the hypotheses of Lemma~\ref{lem:crossing}.
\end{lemma}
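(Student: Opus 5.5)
We choose $w$ near $u_0$ but generic in the $\zeta$-directions of $E$, so that near $w$ the function $L$ is the maximum of exactly two affine functions and $U$ is the minimum of $\gamma$ and $\eta$. The two walls are then the hyperplanes where these pairs agree.

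Fix a face $E\supseteq E_0$ and take $V_4,V_\zeta,N_0,Q_E$ from Lemma~\ref{lem:product}, so that $\relint(E)\cap N_0=V_4\times(\relint(Q_E)\cap V_\zeta)$ and $\aff(E)=\mathbb R^4\times\aff(Q_E)$.

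\emph{Choice of $\zeta_1$.} The functions $\lambda_a$ and $\lambda_b$ are maxima of finitely many linear forms in $\zeta$. Consider the finitely many proper affine subspaces of $\aff(Q_E)$ on which two of these forms agree without agreeing on all of $\aff(Q_E)$. By the last standard fact of Section~\ref{sec:defs}, they do not cover the nonempty convex set $\relint(Q_E)\cap V_\zeta$, which is nonempty since it contains points near $\zeta_0$ on segments into $\relint(Q_E)$. Pick $\zeta_1$ in that set outside them. Then there is a convex neighborhood $W$ of $\zeta_1$, open in $\aff(Q_E)$ and contained in $\relint(Q_E)\cap V_\zeta$, on which $\lambda_a$ and $\lambda_b$ restrict to affine functions.

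\emph{Choice of $w$ and $N$.} Since $\lambda_a,\lambda_b$ vanish at $\zeta_0$ and are continuous, by taking $V_\zeta$ small we may choose $\alpha_1,\beta_1$ close to $\frac14$ with $(\alpha_1,\beta_1,\frac34,\frac34)\in V_4$ and $\alpha_1+\lambda_a(\zeta_1)=\beta_1+\lambda_b(\zeta_1)$. Put $w=(\alpha_1,\beta_1,\frac34,\frac34,\zeta_1)\in\relint(E)$. Let $N=V'\times W$ with $V'\subseteq V_4$ a small open box around $(\alpha_1,\beta_1,\frac34,\frac34)$. Then $N$ is convex, open in $\aff(E)$ and contained in $E$.

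\emph{Form of $L$ and $U$ on $N$.} By Lemma~\ref{lem:product}(b), on $N$ we have $L=\max(h_a,h_b)$ with $h_a=\alpha+\lambda_a$ and $h_b=\beta+\lambda_b$ affine on $N$, and $U=\min(\gamma,\eta)$. Put
\[
H_\SSS=\{h_a=h_b\},\qquad H_\TT=\{\gamma=\eta\},
\]
both hyperplanes of $\aff(E)$ through $w$. The linear part of $h_a-h_b$ involves $\alpha-\beta$ and that of $\gamma-\eta$ does not, so $H_\SSS\neq H_\TT$.

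\emph{The two cells of $\SSS|_E$.} Each set $N\cap\{h_a\geq h_b\}$ and $N\cap\{h_a\leq h_b\}$ is convex, $e$-dimensional, and carries an affine $L$. By Lemma~\ref{lem:restrict}, each lies in an $e$-cell of $\SSS|_E$; call these cells $S_1$ and $S_2$.

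Conversely, let $S$ be any $e$-cell of $\SSS|_E$ meeting $N$. Since $S$ is the closure of its relative interior and $N$ is open in $\aff(E)$, the set $S\cap N$ is $e$-dimensional and convex, and $L$ is affine on it. Suppose $S\cap N$ had points strictly on both sides of $H_\SSS$. A segment joining them would cross $H_\SSS$ transversally, and $\max(h_a,h_b)$ is not affine on such a segment, since $h_a-h_b$ changes sign along it. So $S\cap N$ lies in one closed half-space of $\aff(E)$ within $N$. Its relative interior then meets the relative interior of $S_1$ or of $S_2$. Because the relative interiors of cells partition $E$, this forces $S=S_1$ or $S=S_2$, and $S_i\cap N$ is exactly the corresponding half.

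\emph{The cells of $\TT|_E$.} The identical argument with $U=\min(\gamma,\eta)$ and $H_\TT$ gives $T_1$ and $T_2$.

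The main obstacle I expect is the genericity step: making $\lambda_a$ and $\lambda_b$ affine near $w$ while keeping $w$ in $\relint(E)\cap N_0$. Lemma~\ref{lem:product}(a) is what makes this possible, by reducing the problem to choosing $\zeta_1$ in $\relint(Q_E)$ and handling the $(\alpha,\beta,\gamma,\eta)$-directions freely. The remaining steps follow directly from Lemma~\ref{lem:restrict} and the partition of $E$ by relative interiors of cells.
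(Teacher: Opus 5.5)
Your proposal is correct and follows essentially the same route as the paper: a generic $\zeta_1\in\relint(Q_E)$ near $\zeta_0$ makes $\lambda_a,\lambda_b$ affine nearby, $w$ is placed where $\alpha+\lambda_a=\beta+\lambda_b$ and $\gamma=\eta=\frac34$, and Lemma~\ref{lem:restrict}, together with the fact that $L$ cannot be affine across $H_\SSS$, pins down the two cells on each side. Your segment argument and product neighborhood $V'\times W$ are only cosmetic variants of the paper's open-set argument and open ball. Two small points to tidy: shrink $V_\zeta$, which is harmless for Lemma~\ref{lem:product}, before choosing $\zeta_1$ rather than after, or, as the paper does, build the condition $w(\zeta_1)\in N_0$ into that choice; and state explicitly that applying your half-space argument to $S=S_i$ itself gives $S_i\cap N$ equal to its half of $N$, hence $S_1\neq S_2$.
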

\begin{proof}
Let $e=\dim E$, and take $N_0$ from Lemma~\ref{lem:product}. For $\zeta$ in the $\zeta$-space let
$\varepsilon(\zeta)=\lambda_b(\zeta)-\lambda_a(\zeta)$, and let $w(\zeta)$ be the point with coordinates $\zeta$
and $(\alpha,\beta,\gamma,\eta)=(\frac14+\frac{\varepsilon(\zeta)}2,\frac14-\frac{\varepsilon(\zeta)}2,\frac34,\frac34)$.
As $\lambda_a$ and $\lambda_b$ are continuous and vanish at $\zeta_0$, we have $w(\zeta_0)=u_0$, and $w(\zeta)\in N_0$
for all $\zeta$ near $\zeta_0$. As $\zeta_0\in Q_E$ and the nonempty polyhedron $Q_E$ is the closure of its relative interior, we can choose $\zeta_1\in\relint(Q_E)$ with $w(\zeta_1)\in N_0$ such that any two forms of the same family whose restrictions to $\aff(Q_E)$ differ take different values at $\zeta_1$ (this excludes only finitely many proper affine subspaces of $\aff(Q_E)$). Within each family, the forms of maximal value at $\zeta_1$ then have the same restriction to $\aff(Q_E)$, and the other forms remain smaller near $\zeta_1$. So if $\mu_a$ and $\mu_b$ are forms of maximal value at $\zeta_1$ in the $a$-family and in the $b$-family, respectively, then $\lambda_a=\mu_a$ and $\lambda_b=\mu_b$ on $Q_E$ near $\zeta_1$. Let $w=w(\zeta_1)$. Then $w\in N_0$ and $\zeta_1\in\relint(Q_E)$, so $w\in\relint(E)$ by Lemma~\ref{lem:product}(a). At $w$ the two terms of the maximum in Lemma~\ref{lem:product}(b) are both equal to $\frac14+\frac12(\lambda_a(\zeta_1)+\lambda_b(\zeta_1))$, and the two terms of the minimum are both equal to $\frac34$.

Let $N\subseteq E\cap N_0$ be an open ball in $\aff(E)$ around $w$, so small that $\lambda_a=\mu_a$
and $\lambda_b=\mu_b$ at the $\zeta$-coordinates of its points. By
Lemma~\ref{lem:product}(b), on $N$ the function $L$ is the maximum of
the affine functions $\phi_1=\alpha+\mu_a$ and $\phi_2=\beta+\mu_b$, and
$U=\min(\gamma,\eta)$. By \eqref{eq:product}, $E$ contains a segment
through $w$ in the $\alpha$-direction, so $\phi_1\not\equiv\phi_2$ on
$\aff(E)$. Let $N_1$ and $N_2$ be the intersections of $N$ with the two
closed half-spaces of $\aff(E)$ bounded by $H_\SSS=\{\phi_1=\phi_2\}\cap\aff(E)$, a hyperplane of $\aff(E)$ through $w$, as $\phi_1(w)=\phi_2(w)$. Since $L$ is affine on each $N_i$,
Lemma~\ref{lem:restrict} shows that $N_i$ lies in an $e$-cell $S_i$ of $\SSS|_E$. Let $C$ be an $e$-cell of $\SSS|_E$ that meets $N$. By Lemma~\ref{lem:restrict}, $L$ is affine on $C$; and $C\cap N$ is an $e$-dimensional convex set, as $N$ is open in $\aff(E)$ and $C$ is the closure of its relative interior. We claim that
$C\cap N$ lies in $N_1$ or in $N_2$. Otherwise $C\cap N$ has points strictly on both sides of $H_\SSS$; as $C$ is the closure of its relative interior, so does the set $\relint(C)\cap N$, which is open in $\aff(E)$. So $C\cap N$ contains a nonempty open set on which $\phi_1>\phi_2$ and one on which $\phi_2>\phi_1$. The affine function $L|_{C\cap N}$ equals $\phi_1$ on the first and $\phi_2$ on the second, so $\phi_1=\phi_2$ on $\aff(E)$, a contradiction. So the nonempty set $\relint(C)\cap N$, which is open in $\aff(E)$, lies in $N_i\subseteq S_i$ for some $i$, and hence in $\relint(S_i)$; as distinct cells of $\SSS$ have disjoint relative interiors, $C=S_i$. Thus $S_1$ and $S_2$ are the only $e$-cells of $\SSS|_E$
that meet $N$. Also $S_i\cap N=N_i$: the set $S_i\cap N$ contains $N_i$ and, by the argument just given with
$C=S_i$, lies in $N_1$ or in $N_2$, while $N_i\not\subseteq N_{3-i}$, as $\phi_1-\phi_2$ takes both signs on $N$. As $N_1\neq N_2$, it follows that $S_1\neq S_2$. The same holds for $\TT|_E$ and
$H_\TT=\{\gamma=\eta\}\cap\aff(E)$, as $E$ also contains a segment
through $w$ in the $\gamma$-direction. Finally, $H_\SSS\neq H_\TT$: moving from $w$ in the $\alpha$-direction, which is parallel to $\aff(E)$ by \eqref{eq:product}, changes $\phi_1-\phi_2$ but not
$\gamma-\eta$.
\end{proof}

\subsection{Faces containing \texorpdfstring{$E_0$}{E0}}
\begin{lemma}\label{lem:coface-count}
Let $E$ be a nonempty $e$-face of an $m$-dimensional polytope $Q$. For
$0\leq j\leq m-e$, the face $E$ lies in at least $\binom{m-e}{j}$ faces
of $Q$ of dimension $e+j$.
\end{lemma}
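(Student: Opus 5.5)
We argue through the face figure (quotient polytope) $Q/E$. By the standard correspondence \cite[Chapter~2]{Ziegler}, the faces of $Q$ that contain $E$ correspond bijectively to the nonempty faces of a polytope $Q/E$ of dimension $m-e-1$, with a face of $Q$ of dimension $e+j$ corresponding to a face of $Q/E$ of dimension $j-1$. Here $Q$ itself corresponds to $Q/E$ and $E$ to the empty face. The statement therefore reduces to the following claim: every $d$-dimensional polytope $R$ with $d=m-e-1$ has at least $\binom{d+1}{j}$ faces of dimension $j-1$, for $0\leq j\leq d+1$, counting the empty face in dimension $-1$.

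We prove this claim by induction on $d$. The case $j=0$ is the empty face, and the case $j=d+1$ is $R$ itself. For $d=0$ both cases are already covered. Now let $d\geq1$ and $1\leq j\leq d$, and fix a vertex $v$ of $R$.

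The faces of $R$ of dimension $j-1$ fall into two disjoint groups.

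\emph{Faces containing $v$.} These correspond to the $(j-2)$-faces of the vertex figure $R/v$, a $(d-1)$-polytope \cite[Proposition~2.4]{Ziegler}. By induction there are at least $\binom{d}{j-1}$ of them.

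\emph{Faces not containing $v$.} We need at least $\binom{d}{j}$ of these. Choose a facet $F$ of $R$ not containing $v$; one exists, because $v$ is not contained in every facet (the facets of a polytope of positive dimension have empty common intersection). By induction, $F$ has at least $\binom{d}{j}$ faces of dimension $j-1$. Each of them is a face of $R$, and none contains $v$, since $v\notin F$.

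Pascal's rule then gives at least $\binom{d}{j-1}+\binom{d}{j}=\binom{d+1}{j}$ faces of dimension $j-1$, completing the induction.

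Instead of the quotient, we could run the same induction directly on $Q$. Induct on $m-e$, fixing a face $E'$ of dimension $e+1$ that contains $E$ and a facet $G\supseteq E$ of $Q$ with $E'\not\subseteq G$. The faces containing $E'$ and the faces of $G$ containing $E$ are disjoint families, and induction applies to each. This route needs the existence of such $E'$ and $G$, which again comes from the face lattice structure. I therefore prefer the face-figure route, since it relies only on the cited results \cite[Proposition~2.4 and Chapter~2]{Ziegler}.

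The main obstacle is the existence of the face figure and its dimension count. The paper cites only vertex figures and face figures, so I would either invoke face figures directly, or iterate vertex figures: a face figure over $E$ can be obtained by taking vertex figures repeatedly along a flag, with each step lowering the dimension by one. The combinatorial induction itself is routine.
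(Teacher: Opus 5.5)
Your proof is correct and is essentially the paper's argument. The paper also reduces via the face figure of $E$ to the bound that a $d$-polytope has at least $\binom{d+1}{j}$ faces of dimension $j-1$, and proves that bound by induction: a vertex $v$, its vertex figure for the faces through $v$, a facet avoiding $v$ for the rest, and Pascal's rule.

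The only difference is cosmetic. The paper picks the facet first and then a vertex outside it, using that a polytope is the convex hull of its vertices. You pick the vertex first and use that the facets have empty common intersection. One small slip: the correspondence should be with all faces of $Q/E$, the empty face included (as your own next sentence uses), not only the nonempty ones.
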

\begin{proof}
First, an $s$-dimensional polytope has at least as many $\kappa$-faces
as the $s$-simplex, namely $\binom{s+1}{\kappa+1}$ for $-1\leq\kappa\leq s$, the empty face being counted for $\kappa=-1$. We use induction on $s$. For $s\leq0$ the polytope is empty or a point, and the bound holds with equality. Let $s\geq1$. The polytope has a facet $D$ and, being the convex hull of its vertices, a vertex $v$ outside the proper face $D$. By the induction hypothesis for the $(s-1)$-polytope $D$, at least $\binom{s}{\kappa+1}$ of the $\kappa$-faces lie in $D$. The vertex figure of $v$ is an $(s-1)$-polytope whose $(\kappa-1)$-faces correspond to the $\kappa$-faces that contain $v$ \cite[Proposition~2.4]{Ziegler}, so, by the induction hypothesis, at least $\binom{s}{\kappa}$ of the $\kappa$-faces contain $v$. No face lies in $D$ and contains $v$, so Pascal's rule gives the bound.

Second, the faces of $Q$ containing $E$ correspond to those of a
polytope of dimension $m-e-1$, the face figure of $E$
\cite[Chapter~2]{Ziegler} (empty if $m=e$); an $(e+j)$-face corresponds to
a $(j-1)$-face (the empty face for $j=0$).
The first part ($s=m-e-1$, $\kappa=j-1$) gives at least
$\binom{m-e}{j}$ of them.
\end{proof}

\subsection{Proof of the inequality in Theorem~\ref{thm:main}}\label{subsec:sum}
\begin{proof}[Proof]
As $P$ contains $X$, we have $n\geq5$. For the switch of Section~\ref{subsec:E0}, $\dim B=n-1$ and $\dim
E_0=4$, so Lemma~\ref{lem:coface-count} gives at least
$\binom{n-5}{e-4}$ faces $E\supseteq E_0$ of each dimension $e$ with
$4\leq e\leq n-1$. Each such face has $\delta_E(t)\geq t^{e-2}(1+t)^2$ by
Lemmas~\ref{lem:coface-crossing} and~\ref{lem:crossing}, and every other face has $\delta_E(t)\geq0$ by Lemma~\ref{lem:carrier}(iii) and
Proposition~\ref{prop:transverse}. By Lemma~\ref{lem:identity},
\[
F_{\OO_{I\cup\{q\}}(P)}-F_{\OO_I(P)}
\geq\sum_{e=4}^{n-1}\binom{n-5}{e-4}t^{e-2}(1+t)^2
=t^2(1+t)^2(1+t)^{n-5}=t^2(1+t)^{n-3}.
\]
Now take a linear extension of $I$, followed by $q$ and then by a linear extension of $J'$; as $I$ is an order ideal and $q=\min J$,
this is a linear extension $p_1,\ldots,p_n$ of $P$. Each
$I_j=\{p_1,\ldots,p_j\}$ ($0\leq j\leq n$, $I_0=\varnothing$) is an order ideal and $p_{j+1}$ is minimal in $P\setminus I_j$ for $j<n$; with $j_0=|I|$ we have
$I_{j_0}=I$ and $I_{j_0+1}=I\cup\{q\}$. By
Proposition~\ref{prop:mono},
\[
F_{\OO(P)}=F_{\OO_{I_0}(P)}\leq F_{\OO_{I_1}(P)}\leq\cdots\leq
F_{\OO_{I_n}(P)}=F_{\CC(P)},
\]
and the difference $F_{\CC(P)}-F_{\OO(P)}$ is the sum of the changes over the $n$ steps, each of them nonnegative; the change
from $I$ to $I\cup\{q\}$ is at least $t^2(1+t)^{n-3}$, which proves the inequality.
\end{proof}

\section{Sharpness and the equality case}\label{sec:sharp}
\begin{proposition}\label{prop:X} We have $F_{\CC(X)}-F_{\OO(X)}=t^2+2t^3+t^4$.
\end{proposition}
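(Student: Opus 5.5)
The plan is to write $F_{\CC(X)}-F_{\OO(X)}$ as a sum of switch differences along the linear extension $a,b,c,g,h$ of $X$, as in the proof of Theorem~\ref{thm:main}, and then evaluate each switch with Lemma~\ref{lem:identity}. The guiding observation is simple. If $L$ or $U$ is affine on a face $E$ of the base, then $\delta_E=0$. Suppose, say, $L$ is affine on $E$. Then $\SSS|_E$ consists of the nonempty faces of $E$, so $\sigma_E(k)=[k=e]$. Moreover the cells of $\RR$ with carrier $E$ are exactly the cells of $\TT$ with carrier $E$, so $\rho_E=\tau_E$, and $\delta_E=0$. In particular, if $L$ or $U$ is affine on all of $B$, the switch changes nothing.

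\emph{The switches other than that of $c$.}
\begin{itemize}
\item At the switches of $a$ and $b$, no element of $I$ lies below the switched element, so $L\equiv0$.
\item At the switches of $g$ and of $h$, no element lies above the switched element, so $U\equiv1$.
\end{itemize}
In all four cases the change of the face polynomial is $0$. The whole difference therefore comes from the switch of $c$ with $I=\{a,b\}$.

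\emph{The switch of $c$.} Here $B=\OO_{\{a,b\}}(X\setminus\{c\})$. Writing out its defining inequalities as in Lemma~\ref{lem:E0}, $B$ is the $4$-polytope
\[
\{\alpha,\beta\geq0,\ \max(\alpha,\beta)\leq\min(\gamma,\eta),\ \gamma,\eta\leq1\},
\]
with $L=\max(\alpha,\beta)$ and $U=\min(\gamma,\eta)$. Its facets are cut out by
\[
\alpha=0,\quad \beta=0,\quad \gamma=1,\quad \eta=1,\quad \alpha=\gamma,\quad \alpha=\eta,\quad \beta=\gamma,\quad \beta=\eta.
\]
On each facet, $L$ or $U$ is affine:
\begin{itemize}
\item On $\{\alpha=0\}$ we have $L=\beta$, and on $\{\beta=0\}$ we have $L=\alpha$.
\item On $\{\gamma=1\}$ we have $U=\eta$, and on $\{\eta=1\}$ we have $U=\gamma$.
\item On $\{\alpha=\gamma\}$ we have $\beta\leq\gamma=\alpha$, so $L=\alpha$; similarly $L=\alpha$ on $\{\alpha=\eta\}$ and $L=\beta$ on $\{\beta=\gamma\}$ and $\{\beta=\eta\}$.
\end{itemize}
Every proper face lies in a facet, and affinity passes to subsets. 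Hence $\delta_E=0$ for every proper face $E$, and Lemma~\ref{lem:identity} gives $F_{\CC(X)}-F_{\OO(X)}=\delta_B$.

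\emph{Computing $\delta_B$.} This is the step needing the most care: we must count exactly the cells with carrier $B$.
\begin{itemize}
\item The maximal cells of $\SSS$ are $A_\alpha=B\cap\{\alpha\geq\beta\}$ and $A_\beta=B\cap\{\alpha\leq\beta\}$. Their common face $B\cap\{\alpha=\beta\}$ has $u_0$ in its relative interior, so it is a $3$-cell with carrier $B$.
\item Every other face of $A_\alpha$ or $A_\beta$ is cut out by making some defining inequality of $B$ tight. It therefore lies in a facet of $B$ and does not have carrier $B$. So $\sigma_B=2t^4+t^3$, and likewise $\tau_B=2t^4+t^3$.
\item For $\RR$, the cells $S\cap T$ with carrier $B$ are exactly those cut out by sign conditions on $\alpha-\beta$ and $\gamma-\eta$ alone, with no tight inequality of $B$. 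These are the four quadrants, the four half-walls and the crossing $B\cap\{\alpha=\beta,\gamma=\eta\}$. Each contains points of $\relint(B)$ near $u_0$, as in Figure~\ref{fig:slice}, so $\rho_B=4t^4+4t^3+t^2$.
\end{itemize}
To justify the exclusions, I will note that any cell of $\RR$ is $S_0\cap T_0$, where $S_0$ and $T_0$ are faces of maximal cells cut out by tight inequalities. If one of these tight inequalities is a defining inequality of $B$, the cell lies in a facet. Substituting into the definition of $\delta_B$ then gives
\[
\delta_B=t^2+(4-1-1)t^3+(4-2-2+1)t^4=t^2+2t^3+t^4,
\]
which is the claimed difference.
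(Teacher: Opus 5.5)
Your proposal is correct and follows the paper's proof essentially step by step: the switch order $a,b,c,g,h$, the vanishing of $\delta_E$ whenever $L$ or $U$ is affine on $E$, the check on the eight facet hyperplanes of $B$, and the count of cells with carrier $B$ giving $\delta_B=t^2+2t^3+t^4$. The one step you assert rather than prove is that $L$ affine on $E$ forces $\SSS|_E$ to be the set of nonempty faces of $E$. This is where the paper invokes Lemma~\ref{lem:restrict}: $E$ lies in an $e$-cell of $\SSS|_E$, so $E$ is itself a cell, and every other cell of $\SSS|_E$ meets it in a common face, so it is a face of $E$.
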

\begin{proof}
We switch the elements of $X$ in the order $a,b,c,g,h$ and compute each
change with Lemma~\ref{lem:identity}. At the switches of $a$ and $b$, no
element of $I$ lies below the switched element, so $L\equiv0$ is affine on $B$. Hence $\SSS$ consists of the nonempty faces of $B$, so $F_\SSS=F_B$; and $\RR=\TT$, because each cell $G\cap T$ ($G$ a face of $B$, $T\in\TT$) is a
face of $T$. So the change is $F_B+F_\TT-F_B-F_\TT=0$; likewise at the switches of $g$ and $h$, where $U\equiv1$.

At the switch of $c$ we have $I=\{a,b\}$ and $J'=\{g,h\}$. This is the switch of Section~\ref{subsec:E0} for
$P=X$: there $q=c$, as the strict upper sets of $g$ and $h$ are empty, hence chains. Here $g'=g$ and $h'=h$, the vector $\zeta$ is empty, and $B=E_0$, with $L=\max(\alpha,\beta)$ and $U=\min(\gamma,\eta)$.
The base $B$ is the set of solutions of the eight inequalities $\alpha\geq0$, $\beta\geq0$, $\gamma\leq1$, $\eta\leq1$,
$\alpha\leq\gamma$, $\alpha\leq\eta$, $\beta\leq\gamma$, $\beta\leq\eta$ (Lemma~\ref{lem:E0}), so every nonempty proper face of $B$ lies in one of the hyperplanes $\alpha=0$, $\beta=0$, $\gamma=1$, $\eta=1$, $\alpha=\gamma$, $\alpha=\eta$, $\beta=\gamma$, $\beta=\eta$, as one of the eight is tight on it (Section~\ref{sec:defs}). On the first four, $L$ or $U$ is affine (for example, $L=\beta$ on $\alpha=0$).
On the other four, $L$ and $U$ are both affine: for example, on $\alpha=\gamma$ we have
$\beta\leq\gamma=\alpha\leq\eta$, so $L=\alpha$ and $U=\gamma$ there; the other three are symmetric. Thus, for every nonempty proper face $E$ of $B$, the function $L$ or $U$ is affine on
$E$. By symmetry, suppose that $L$ is affine on $E$, and let $e=\dim E$. By Lemma~\ref{lem:restrict}, $E$ lies
in an $e$-cell of $\SSS|_E$, which is therefore $E$ itself; so $E$ is a cell of $\SSS$. Every cell of $\SSS|_E$
is a cell of $\SSS$ contained in $E$, and it meets the cell $E$ in a common face, so it is a face of $E$.
Hence $\SSS|_E$ consists of the nonempty faces of $E$. Then $\RR|_E=\TT|_E$, since
each $G\cap T$ ($G$ a face of $E$, $T\in\TT|_E$) is a face of $T$. As a cell with carrier $E$ lies in $E$, the cells of $\RR$ and of $\TT$ with carrier $E$ are the same, and $E$ is the only cell of $\SSS$ with carrier $E$. Hence $\delta_{E,k}=\rho_E(k)-[k=e]-\tau_E(k)+[k=e]=0$ for all $k$, and only the face $B$ contributes to the sum in Lemma~\ref{lem:identity}.

The maximal cells of $\SSS$ are $B\cap\{\alpha\geq\beta\}$ and
$B\cap\{\alpha\leq\beta\}$, and those of $\TT$ are $B\cap\{\gamma\geq\eta\}$ and $B\cap\{\gamma\leq\eta\}$. Indeed, by the description of the maximal cells in Section~\ref{sec:cited}, these four sets are the full-dimensional sets $A_\ell$: here $L=\max(0,\alpha,\beta)$, and
$A_0=B\cap\{\alpha=\beta=0\}$ is not full-dimensional; likewise, $U=\min(\gamma,\eta,1)$, and $A_1=B\cap\{\gamma=\eta=1\}$ is not full-dimensional either. The cells with carrier $B$ are as follows: in $\SSS$, two $4$-cells and the $3$-cell $B\cap\{\alpha=\beta\}$; in $\TT$, two
$4$-cells and the $3$-cell $B\cap\{\gamma=\eta\}$; in $\RR$, four
$4$-cells, four $3$-cells and the $2$-cell
$B\cap\{\alpha=\beta,\ \gamma=\eta\}$. Indeed, a face of $B\cap\{\alpha\geq\beta\}$ is cut out by some of the eight inequalities of $B$ and the inequality $\alpha\geq\beta$,
and if one of the eight is tight on such a face, the face lies in a proper face of $B$; so the cells of $\SSS$ with carrier
$B$ are $B\cap\{\alpha\geq\beta\}$, $B\cap\{\alpha\leq\beta\}$ and their common face $B\cap\{\alpha=\beta\}$, and
likewise for $\TT$. A cell $S\cap T$ of $\RR$ has carrier $B$ only if $S$ and $T$ do. The nine intersections of
these three cells of $\SSS$ with these three cells of $\TT$ each contain $u_0=(\frac14,\frac14,\frac34,\frac34)$,
an interior point of $B$, so each has carrier $B$; near $u_0$ they are the sets where $\alpha-\beta$ and
$\gamma-\eta$ have prescribed signs or vanish, as in Figure~\ref{fig:slice}, so they are distinct and have the
stated dimensions. So
$\delta_B=(1-0-0)t^2+(4-1-1)t^3+(4-2-2+1)t^4=t^2+2t^3+t^4$; adding the
five changes gives the claim.
\end{proof}

\begin{proposition}\label{prop:sharp}
Let $P_5=X$, and, for $m\geq5$, let $P_{m+1}$ be obtained from $P_m$ by adjoining a new least element $\hat0$. Then, for every
$n\geq5$, the poset $P_n$ has $n$ elements, contains $X$, and satisfies
$F_{\CC(P_n)}-F_{\OO(P_n)}=t^2(1+t)^{n-3}$.
\end{proposition}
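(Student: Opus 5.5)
Induction on $n$, starting from Proposition~\ref{prop:X} for $n=5$. Clearly $|P_n|=n$, and $P_n$ contains $X$, since $P_5=X$ is an induced subposet and adjoining a least element preserves induced subposets. The whole argument then rests on one geometric claim: when $\hat0$ is adjoined to $P_m$, both $\OO(P_{m+1})$ and $\CC(P_{m+1})$ are pyramids, over polytopes isomorphic to $\OO(P_m)$ and $\CC(P_m)$ respectively.

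For the chain polytope, write $x_0$ for the coordinate of $\hat0$. Every maximal chain of $P_{m+1}$ contains $\hat0$, because $\hat0$ lies below everything. The chain inequalities therefore read $x_0+\sum_{p\in K}x_p\leq1$ for chains $K\subseteq P_m$, together with the inequalities for chains not containing $\hat0$, which these imply. Hence
\[
\CC(P_{m+1})=\{(x_0,x):x_0\geq0,\ x\in(1-x_0)\,\CC(P_m)\}=\operatorname{conv}\bigl((0\times\CC(P_m))\cup\{(1,0)\}\bigr),
\]
a pyramid with apex $e_{\hat0}$ over the copy $\{x_0=0\}\times\CC(P_m)$. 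One checks this with the homogeneity of $\CC(P_m)$ under scaling by $1-x_0\in[0,1]$, together with $0\in\CC(P_m)$.

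For the order polytope, write $v_0$ for the new coordinate. The constraints are $0\leq v_0\leq v_p$ for all $p$, plus those of $\OO(P_m)$. Its vertices are the indicators of upper sets. These are either upper sets of $P_m$, with $v_0=0$, or the whole poset $P_{m+1}$, which gives the all-ones vector. So $\OO(P_{m+1})=\operatorname{conv}\bigl((\{0\}\times\OO(P_m))\cup\{\mathbf1\}\bigr)$, the apex $\mathbf 1$ lying off the hyperplane $v_0=0$. This is again a pyramid, over a copy of $\OO(P_m)$. To confirm it, I would use Stanley's vertex description, quoted in Section~\ref{sec:defs}, for both polytopes rather than reason with inequalities.

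By the pyramid formula of Section~\ref{sec:defs}, $F_{\pyr(Q)}=1+(1+t)F_Q$, and face polynomials are invariant under affine isomorphism. Hence
\[
F_{\CC(P_{m+1})}-F_{\OO(P_{m+1})}=(1+t)\bigl(F_{\CC(P_m)}-F_{\OO(P_m)}\bigr).
\]
Starting from $t^2(1+t)^2$ at $m=5$, this gives $t^2(1+t)^{n-3}$ by induction.

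The only step needing care is justifying the vertex descriptions, that is, checking that the convex hull is the whole polytope. That follows directly from the cited facts that the vertices are the indicators of antichains and of upper sets. For example, the antichains of $P_{m+1}$ are $\{\hat0\}$ and the antichains of $P_m$, and the upper sets of $P_{m+1}$ are $P_{m+1}$ and the upper sets of $P_m$. A polytope is the convex hull of its vertices, so both pyramid identities follow immediately.
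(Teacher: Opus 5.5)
Your proposal is correct and follows the paper's proof. Both polytopes are shown to be pyramids over copies of $\OO(P_m)$ and $\CC(P_m)$, with apexes $\mathbf1$ and $\mathbf e_{\hat0}$, and the formula $F_{\pyr(Q)}=1+(1+t)F_Q$ together with Proposition~\ref{prop:X} finishes the induction. The only difference is how the pyramid identities are confirmed: you use Stanley's vertex description (the upper sets of $P_{m+1}$ are $P_{m+1}$ and those of $P_m$; its antichains are $\{\hat0\}$ and those of $P_m$), whereas the paper slices at $v_{\hat0}=\theta$ and $x_{\hat0}=\theta$ using the defining inequalities. Both routes work. Your displayed inequality description of $\CC(P_{m+1})$ should also include $x_0\leq1$, which comes from the empty chain.
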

\begin{proof}
Each step adds one element, so $|P_n|=n$, and $P_n$ contains $P_5=X$ as an induced subposet. For
$0\leq\theta\leq1$, the points of $\OO(P_{m+1})$ with $v_{\hat0}=\theta$ are the points $(\theta,v)$ with
$v\in[\theta,1]^{P_m}$ and $v_p\leq v_{p'}$ for $p\leq p'$ in $P_m$ (the inequalities $v_{\hat0}\leq v_p$ become $v_p\geq\theta$), that is, the points $(\theta,\theta\mathbf1+(1-\theta)v')$ with $v'\in\OO(P_m)$, where
$\mathbf1\in\mathbb R^{P_m}$ is the all-ones vector. We have $(\theta,\theta\mathbf1+(1-\theta)v')=(1-\theta)(0,v')+\theta\mathbf1'$, where $\mathbf1'$ is the all-ones vector of $\mathbb R^{P_{m+1}}$. As $\mathbf1'$ does not lie on the hyperplane $v_{\hat0}=0$, the polytope $\OO(P_{m+1})$ is the pyramid over $\{0\}\times\OO(P_m)$ with apex $\mathbf1'$. Every maximal chain of $P_{m+1}$ contains $\hat0$. As all coordinates are nonnegative on $\CC(P_{m+1})$, the chain inequalities of the maximal chains imply the others, so the set of points of $\CC(P_{m+1})$ with $x_{\hat0}=\theta$ is $\{\theta\}\times(1-\theta)\CC(P_m)$. Since $(\theta,(1-\theta)x)=(1-\theta)(0,x)+\theta\mathbf e_{\hat0}$ for $x\in\CC(P_m)$, the polytope $\CC(P_{m+1})$ is the pyramid over $\{0\}\times\CC(P_m)$ with apex the unit vector $\mathbf e_{\hat0}$.
By the formula $F_{\pyr(Q)}=1+(1+t)F_Q$ of Section~\ref{sec:defs}, each step multiplies $F_{\CC(P_m)}-F_{\OO(P_m)}$ by
$1+t$ (the constant $1$ in $1+(1+t)F_Q$ cancels), so Proposition~\ref{prop:X} gives
$F_{\CC(P_n)}-F_{\OO(P_n)}=(1+t)^{n-5}(t^2+2t^3+t^4)=t^2(1+t)^{n-3}$.
\end{proof}
A characterization of the posets that attain the bound of Theorem~\ref{thm:main} is given in the expanded
version \cite[Theorem~6.1]{G}; it is not used here.

\begin{corollary}\label{cor:equality}
For an $n$-element poset $P$ and $2\leq k\leq n-1$, the following are
equivalent: (i)~$f_k(\OO(P))=f_k(\CC(P))$; (ii)~$P$
does not contain $X$; (iii)~$\OO(P)$ and $\CC(P)$ are unimodularly
equivalent.
\end{corollary}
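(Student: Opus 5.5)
The equivalence of (ii) and (iii) is Theorem~\ref{thm:HL} of Hibi and Li, so it only remains to connect (i) with (ii). We will prove (ii)$\Rightarrow$(i) through (iii), and (i)$\Rightarrow$(ii) by contraposition using Theorem~\ref{thm:main}.

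For (ii)$\Rightarrow$(i): if $P$ does not contain $X$, then $\OO(P)$ and $\CC(P)$ are unimodularly equivalent by Theorem~\ref{thm:HL}. As noted in Section~\ref{sec:defs}, the affine bijection $u\mapsto Mu+\mathbf w$ maps $k$-faces onto $k$-faces, so the two polytopes have the same $f$-vector. In particular $f_k(\OO(P))=f_k(\CC(P))$ for every $k$.

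For (i)$\Rightarrow$(ii): suppose $P$ contains $X$, so $n\geq5$. Theorem~\ref{thm:main} gives $F_{\CC(P)}-F_{\OO(P)}\geq t^2(1+t)^{n-3}$ coefficientwise. The coefficient of $t^k$ on the right is $\binom{n-3}{k-2}$. For $2\leq k\leq n-1$ we have $0\leq k-2\leq n-3$, so this coefficient is at least $1$. Hence $f_k(\CC(P))-f_k(\OO(P))\geq1$, and (i) fails.

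The only points that need care are that the range $2\leq k\leq n-1$ is exactly where the binomial coefficient $\binom{n-3}{k-2}$ is positive, and that the case $n\leq4$ falls under (ii)$\Rightarrow$(i), where no poset contains $X$. All the real work is already done in Theorem~\ref{thm:main}, so there is no genuine obstacle here.
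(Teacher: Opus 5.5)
Your proof is correct and is essentially the paper's own argument: Theorem~\ref{thm:HL} gives (ii)$\Rightarrow$(iii), invariance of the $f$-vector under unimodular maps gives (iii)$\Rightarrow$(i), and the positive coefficient $\binom{n-3}{k-2}$ of $t^k$ in the bound of Theorem~\ref{thm:main} gives (i)$\Rightarrow$(ii). The only cosmetic difference is that you use both directions of Theorem~\ref{thm:HL}, whereas the paper closes the cycle (ii)$\Rightarrow$(iii)$\Rightarrow$(i)$\Rightarrow$(ii) using only one direction.
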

\begin{proof}
By Theorem~\ref{thm:HL}, (ii) implies (iii), and
(iii) implies (i) because unimodularly equivalent polytopes have
the same $f$-vector (Section~\ref{sec:defs}). If $P$ contains $X$, the
coefficient of $t^k$ in $t^2(1+t)^{n-3}$ is $\binom{n-3}{k-2}$, which is at least $1$, so
$f_k(\CC(P))>f_k(\OO(P))$ by Theorem~\ref{thm:main}; hence (i) implies (ii).
\end{proof}

\begin{proof}[Proof of Theorem~\ref{thm:resolution}]
(a) This is Fourier's Theorem~1.1 \cite{Fourier}. It also follows from Proposition~\ref{prop:mono}: for any linear
extension $p_1,\ldots,p_n$ of $P$, the sequence of switches of $p_1,\ldots,p_n$, as in Section~\ref{subsec:sum},
gives
$F_{\OO(P)}\leq F_{\CC(P)}$; this is Fourier's argument. (b) This is the equivalence of (i) and (iii) in
Corollary~\ref{cor:equality}. (c) The numbers of edges agree by a theorem of Hibi, Li, Sahara and Shikama \cite[Theorem~2.1]{HLSS}; if $P$ contains
$X$, then $\OO(P)$ and $\CC(P)$ are not unimodularly equivalent by Theorem~\ref{thm:HL}.
\end{proof}

\paragraph{Acknowledgments.}
This work was done overwhelmingly by AI models and supervised by the
author. The models were Anthropic's Claude and OpenAI's ChatGPT. The
author takes full responsibility for the paper.

\begin{footnotesize}

\end{footnotesize}
\end{document}